\def\textmatrix#1&#2\\#3&#4\\{\bigl({#1 \atop #3}\ {#2 \atop #4}\bigr)}
\def\dispmatrix#1&#2\\#3&#4\\{\left({#1 \atop #3}\ {#2 \atop #4}\right)}
\newcommand{\beg}{\begin{equation}}
	\newcommand{\eeg}{\end{equation}}
\newcommand{\ben}{\begin{eqnarray*}}
	\newcommand{\een}{\end{eqnarray*}}
\newtheorem{thm}{Theorem}[section]
\newtheorem{cor}[thm]{Corollary}
\newtheorem{lem}[thm]{Lemma}
\newtheorem{prop}[thm]{Proposition}
\numberwithin{equation}{section} \theoremstyle{definition}
\newtheorem{defn}[thm]{Definition}
\newcommand{\HS}{\mathcal H}
\newcommand{\KS}{\mathcal K}
\newcommand{\T}{\mathbb{T}}
\newcommand{\Z}{\mathbb{Z}}
\newcommand{\N}{\mathbb{N}}
\newcommand{\ov}{\overline}
\newcommand{\la}{\left\langle}
\newcommand{\ra}{\right\rangle}
\begin{document}
	\title[$*$-regular dilations for $q$-commuting contractions]{The $q$-commuting contractions and $*$-regular dilations}

	\author[Tomar] {NITIN TOMAR}
	
	\address[Nitin Tomar]{Department of Mathematics, Indian Institute of Technology Bombay, Powai, Mumbai-400076, India.} \email{tomarnitin414@gmail.com}		
	
	\keywords{$q$-commuting contractions, regular dilations, $*$-regular dilations}	
	
	\subjclass[2020]{47A20, 47B02}	
	
	\thanks{The author is supported through the `Core Research Grant (CRG)', Award No. CRG/2023/005223, granted to Professor Sourav Pal by the Science and Engineering Research Board (SERB)}	
	
	\begin{abstract}
		A tuple $\underline{T}=(T_1, \dotsc, T_k)$ of contractions on a Hilbert space $\mathcal{H}$ is said to be \textit{$q$-commuting with} $\|q\|=1$ if there is a family of scalars $q=\{q_{ij} \in \mathbb{C} : |q_{ij}|=1, \ q_{ij}=q_{ji}^{-1}, \  1 \leq i < j \leq k \}$ such that $T_i T_j =q_{ij}T_j T_i$  for $1 \leq i < j \leq k$. In addition if $T_iT_j^*=\overline{q}_{ij}T_j^*T_i$ for $1 \leq i<j \leq k$, then $\underline{T}$ is said to be \textit{doubly $q$-commuting}. In this article, we introduce the notion of $*$-regular dilation of a $q$-commuting tuple of contractions $\underline{T}$ with $\|q\|=1$. The results presented in this article are motivated by the works of Ga\c spar–Suciu \cite{Gaspar} and Timotin \cite{Timotin} on commuting contractions. Indeed, we prove that a minimal $q$-isometric dilation $\underline{V}$ of a $q$-commuting tuple of contractions with $\|q\|=1$ is $*$-regular if and only if $\underline{V}$ is doubly $q$-commuting. Furthermore, we establish that a $q$-commuting tuple $\underline{T}$ of contractions with $\|q\|=1$ is doubly $q$-commuting if and only if $\underline{T}$ admits a minimal regular $q$-isometric dilation which is doubly $q$-commuting. We also obtain a structure theorem for $q$-commuting tuples of contractions with $\|q\|=1$ that admit a regular $q$-unitary dilation.
	\end{abstract}

	\maketitle

	\section{Introduction}
	
	\noindent 
	Throughout the paper, all operators are bounded linear maps acting on complex Hilbert spaces. We denote by $\mathbb{C}, \mathbb{D}$ and $\mathbb{T}$ the complex plane, the unit disk and the unit circle in the complex plane, respectively, with center at the origin. $\mathcal{B}(\HS)$ denotes the algebra of operators acting on a Hilbert space $\HS$. The symbols $\N$, $\Z$ and $\Z_+$ denote the sets of natural numbers, integers and non-negative integers, respectively.  A \textit{contraction} is an operator with norm at most $1$. B. Sz.-Nagy  \cite{NagyII} proved that for every contraction $T$ acting on a Hilbert space $\HS$, there exist a Hilbert space $\mathcal{K}$ containing $\mathcal{H}$ and a unitary $U$ on $\mathcal{K}$ such that
	\[
	T^k=P_\HS U^k|_\HS \quad (k=0, 1, 2, \dotsc), 
	\]
	where $P_\HS$ denotes the orthogonal projection of $\mathcal K$ onto $\HS$. In operator theoretic language, we call it a \textit{unitary dilation} of a contraction. Ando \cite{Ando} proved the extension of Sz.-Nagy's dilation theorem for a single contraction to any commuting pair of contractions. However, Parrott \cite{Par} provided a counter-example to show that such a dilation theorem does not hold for more than two commuting contractions. 
	Since then, many efforts have been devoted to characterize the families of $n$-tuples of commuting contractions that admit unitary dilations (see \cite{Agler, Bal:Tim:Tre, Bal:Tre:Vin, Cur:Vas 1, Cur:Vas 2} and the references therein). In this context, Brehmer \cite{Brehmer} introduced a stronger notion of unitary dilation, known as regular unitary dilation, and provided a characterization of commuting families of contractions that admit regular unitary dilations. This foundational work was further developed by Halperin \cite{Halp} and S. Nagy \cite{Nagy}, who offered further insights into the structure and properties of regular unitary dilations.

	\medskip

	While the classical dilation theory has primarily focused on commuting families of contractions, subsequent work has extended these ideas to a more general non-commuting setting. In particular, many results have been established for 
	$q$-commuting contractions with $\|q\|=1$. In this direction, an interested reader is referred to \cite{Ball, Barik, Bis:Pal:Sah, Dey, DeyI, Jeu, K.M., PalII, PalIII, Sebestyen} and references therein. For decomposition results in this context, we refer to \cite{Maji, RI, RII, Pal}.
	
	\begin{defn}
		A family $\mathcal{T}=\{T_\alpha: \alpha \in \Lambda \}$ of operators acting on a Hilbert space $\mathcal{H}$ is said be \textit{$q$-commuting}  with $\|q\|=1$ if there exists a family of scalars $q=\{q_{\alpha \beta} \in \T \ :  \ q_{\alpha \beta}=q_{\beta \alpha}^{-1}, \  \alpha \ne \beta  \ \text{in} \ \Lambda \}$ such that $T_\alpha T_\beta=q_{\alpha \beta}T_\beta T_\alpha$ for all $\alpha, \beta$ in $\Lambda$ with $\alpha \ne \beta$. In addition, if $T_\alpha T_\beta^*=\overline{q}_{\alpha \beta} T_\beta^* T_\alpha$, then $\mathcal{T}$ is said to be \textit{doubly $q$-commuting}. 
	\end{defn}
	
	In this article, we are mainly concerned with $q$-commuting tuples of contractions with $\|q\|=1$. Accordingly, we now recall the notion of $q$-isometric and $q$-unitary dilation for such tuples.
	
	\begin{defn} Let $\underline{T}=(T_1, \dotsc, T_k)$ be a $q$-commuting tuple of contractions with $\|q\|=1$ acting on a Hilbert space $\HS$. A $q$-commuting tuple of isometries $\underline{V}=(V_1, \dotsc, V_k)$ with $\|q\|=1$, acting on a Hilbert space $\mathcal{K} \supseteq \HS$, is said to be a \textit{$q$-commuting isometric dilation}, or simply a \textit{$q$-isometric dilation}, of $\underline{T}$ if
		\[
		T_1^{m_1}\dotsc T_k^{m_k}=P_\HS V_1^{m_1}\dotsc V_k^{m_k}|_{\HS} \quad \text{for all  $m_1, \dotsc, m_k \in \Z_+$.}
		\]	
		The dilation is called \textit{minimal} if 
		\[
		\mathcal{K}=\ov{\text{span}}\left\{V_1^{m_1}\dotsc V_k^{m_k}h : m_1, \dotsc, m_k \in \Z_+, h \in \HS \right\}.
		\]
		A $q$-commuting tuple of unitaries $\underline{U}=(U_1, \dotsc, U_k)$ with $\|q\|=1$, acting on a Hilbert space $\mathcal{K}$ containing $\HS$, is said to be a \textit{$q$-commuting unitary dilation}, or simply a \textit{$q$-unitary dilation}, of $\underline{T}$ if
		\[
		T_1^{m_1}\dotsc T_k^{m_k}=P_\HS U_1^{m_1}\dotsc U_k^{m_k}|_{\HS} \quad \text{for all  $m_1, \dotsc, m_k \in \Z_+$.}
		\]	
		In this case, the dilation is called \textit{minimal} if
		$
		\mathcal{K}=\ov{\text{span}}\left\{U_1^{m_1}\dotsc U_k^{m_k}h : m_1, \dotsc, m_k \in \Z, h \in \HS \right\}.
		$
	\end{defn}

	\medskip 
	
	Some of the earliest contributions to dilation theory in the $q$-commuting setting are due to Sebesty\'{e}n \cite{Sebestyen}, and Keshari and Mallick \cite{K.M.}. The author of  \cite{Sebestyen} proved that an anticommuting pair of contractions on a Hilbert space admits a dilation to an anticommuting pair of unitaries. Furthermore, Keshari and Mallick \cite{K.M.} generalized this result to any $q$-commuting pair of contractions with $\|q\|=1$.  In the multivariable setting, the authors of \cite{Barik} (see Theorem 3.10 in \cite{Barik}) proved that any $q$-commuting tuple $\underline{T}=(T_1, \dotsc, T_k)$ of contractions with $\|q\|=1$ admits a dilation to $q$-commuting $k$-tuple of isometries if $\underline{T}$ satisfies the following: 
	\begin{equation*}
		\underset{\{\alpha_1, \dotsc, \alpha_m\} \subset u}{\sum}(-1)^m(T_{\alpha_1}\dotsc T_{\alpha_m})(T_{\alpha_1}\dotsc T_{\alpha_m})^* \geq 0 \quad \text{for every $u \subseteq \{1, \dotsc, k\}$}.	
	\end{equation*}
	We also mention here that Theorem 6.2 in \cite{Jeu} (see also Corollary 6.2 in \cite{Ball}) guarantees a $q$-unitary extension of a doubly $q$-commuting tuple of isometries. Moreover, Theorem 6.1 in \cite{Ball} established the success of $q$-unitary extension for a $q$-commuting tuple of isometries with $\|q\|=1$, and Corollary 5.4 in \cite{PalII} extends it to an arbitrary $q$-commuting family of isometries with $\|q\|=1$.

	\medskip 
	
	The authors of \cite{PalII} introduced the notion of regular $q$-unitary dilation in the $q$-commuting setting. We say that a $q$-commuting family $\mathcal{T}=\{T_\alpha : \alpha \in \Lambda\}$ of contractions  with $\|q\|=1$ acting on a Hilbert space $\mathcal{H}$ admits a \textit{regular $q$-unitary dilation} if there exist a Hilbert space $\mathcal
	{K} \supseteq \mathcal{H}$ and a $q$-commuting family $\mathcal{U}=\{U_\alpha : \alpha \in \Lambda \}$ of unitaries acting on $\mathcal{K}$ such that
	\begin{align}\label{eqn_reg}
		\underset{1\leq i<j \leq k}\prod q_{\alpha_{i}\alpha_{j}}^{-m_{\alpha_{i}}^+m_{\alpha_{j}}^-}\left[(T_{\alpha_{1}}^{m_{\alpha_{1}}^-})^*\dotsc (T_{\alpha_{k}}^{m_{\alpha_{k}}^-})^*\right]\left[T_{\alpha_{1}}^{m_{\alpha_{1}}^+}\dotsc T_{\alpha_{k}}^{m_{\alpha_{k}}^+}\right]=P_\mathcal{H}U_{\alpha_{1}}^{m_{\alpha_{1}}}\dotsc U_{\alpha_{k}}^{m_{\alpha_{k}}}|_\mathcal{H}
	\end{align}
	for every $m_{\alpha_1}, \dotsc, m_{\alpha_k} \in \mathbb{Z}$ and $\alpha_1,  \dotsc, \alpha_k$ in $\Lambda$. Here, $m^+=\max \{m, 0\}$ and $m^-=-\min\{m, 0\}$ for $m \in \mathbb{Z}$. It is well-known that a commuting family of contractions satisfying Brehmer's positivity conditions admits a regular unitary dilation and vice-versa (see \cite{Brehmer, Halp}). The authors of \cite{PalII} generalized this result and proved that a $q$-commuting family $\mathcal{T}$ of contractions with $\|q\|=1$ admits a regular $q$-unitary dilation if and only if $\mathcal{T}$ satisfies Brehmer's positivity conditions.	Furthermore, the authors of \cite{PalII} found success of regular $q$-unitary dilation for several classes of $q$-commuting contractions with $\|q\|=1$ as mentioned below.
	
	\begin{thm}[\cite{PalII}, Theorem 1.13]\label{thm_001}
		Let $\mathcal{T}=\{T_\alpha : \alpha \in \Lambda\}$ be a $q$-commuting family of contractions with $\|q\|=1$ on a Hilbert space $\HS$. Then $\mathcal{T}$ admits a regular $q$-unitary dilation in each of the following cases:
		\begin{enumerate}
			\item $\mathcal{T}$ consists of $q$-commuting isometries;
			\item $\mathcal{T}$ consists of doubly $q$-commuting contractions;
			\item $\mathcal{T}$ is a countable family and ${\displaystyle \sum_{\alpha \in \Lambda} \|T_{\alpha}h\|^2 \leq \|h\|^2 }$ for all $h\in \HS$.
		\end{enumerate}
	\end{thm}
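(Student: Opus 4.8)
The plan is to verify the three cases by reducing each of them to Brehmer's positivity conditions, which by the characterization of \cite{PalII} are equivalent to the success of a regular $q$-unitary dilation. Writing $\Delta_u(\underline{T}) := \sum_{v \subseteq u}(-1)^{|v|} \, (\prod_{i \in v} T_i)(\prod_{i \in v} T_i)^*$ for a finite subset $u \subseteq \Lambda$ (with the scalar $q$-factors absorbed so that the expression is self-adjoint, exactly as in \eqref{eqn_reg}), Brehmer's conditions demand $\Delta_u(\underline{T}) \geq 0$ for every finite $u$. Since all three conditions in the statement are testable on finite sub-families, it suffices to treat a finite $q$-commuting tuple $\underline{T} = (T_1, \dotsc, T_k)$ throughout.

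\textbf{Case (1).} When each $T_\alpha$ is an isometry, $T_\alpha^* T_\alpha = I$. The key computation is that for a $q$-commuting tuple of isometries the product $S_u := \prod_{i \in u} T_i$ is again an isometry (the unimodular scalars $q_{ij}$ cancel in $S_u^* S_u$), so $S_u S_u^*$ is an orthogonal projection $P_u$. One then checks, by induction on $|u|$ using the $q$-commutation relations to reorder factors, that the ranges are nested appropriately and that $\Delta_u(\underline{T})$ telescopes to a product of mutually commuting projections $\prod_{i \in u}(I - T_i T_i^*)$ — equivalently, that it equals the projection onto $\bigcap$ of the relevant subspaces — hence is positive. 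This mirrors the classical isometric case; the only extra bookkeeping is tracking the unimodular factors, which I expect to vanish identically by the $q_{ij} = q_{ji}^{-1}$, $|q_{ij}| = 1$ relations.

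\textbf{Case (2).} For doubly $q$-commuting contractions we have the extra relation $T_i T_j^* = \overline{q}_{ij} T_j^* T_i$. The strategy is to show that the defect operators $D_i := I - T_i T_i^*$ commute with $T_j T_j^*$ (and with $D_j$) for $i \neq j$: indeed $T_j T_j^* \cdot T_i T_i^* = |q_{ij}|^2 \, T_i T_i^* \cdot T_j T_j^* = T_i T_i^* \cdot T_j T_j^*$ using the double $q$-commutation twice. Consequently the operators $\{T_i T_i^*\}_{i \in u}$ form a commuting family of positive contractions, and a standard identity (the inclusion–exclusion / ``complementary product'' formula, as in Brehmer or \cite{Halp}) gives $\Delta_u(\underline{T}) = \prod_{i \in u}(I - T_i T_i^*) \geq 0$, the scalar factors again cancelling because each $T_i T_i^*$ is paired with itself. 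This is the quickest of the three.

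\textbf{Case (3).} Here the hypothesis $\sum_{\alpha} \|T_\alpha h\|^2 \leq \|h\|^2$ says $\sum_{\alpha} T_\alpha^* T_\alpha \leq I$. For finite $u = \{\alpha_1, \dotsc, \alpha_m\}$ I would expand $\Delta_u$ and show directly that, for every $h$,
\[
\big\langle \Delta_u(\underline{T}) h, h \big\rangle \;=\; \sum_{v \subseteq u}(-1)^{|v|}\Big\| \Big(\textstyle\prod_{i \in v} T_i\Big)^{*} h \Big\|^2 \;\geq\; 0,
\]
by an inductive argument on $|u|$: peeling off one index $\alpha_m$, one writes $\Delta_u = \Delta_{u \setminus \{\alpha_m\}}(\underline{T}) - T_{\alpha_m}\, \Delta_{u \setminus \{\alpha_m\}}(\underline{T})\, T_{\alpha_m}^*$ (with the $q$-scalars arranged to make this exact), and then the contractivity hypothesis, applied through the sum condition, controls the subtracted term. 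This is the case I expect to be the \textbf{main obstacle}: in the commuting case it is classical (a theorem of S.~Nagy), but the $q$-deformed reordering must be handled carefully so that the telescoping identity $\Delta_u = \Delta_{u\setminus\{\alpha_m\}} - T_{\alpha_m}\Delta_{u\setminus\{\alpha_m\}}T_{\alpha_m}^*$ holds on the nose with unimodular corrections, and one must confirm that the scalar $q$-prefactors in \eqref{eqn_reg} are precisely what is needed for self-adjointness and positivity to be preserved at each step. Once the reduction to a manifestly positive sum of squared norms is in place, the summability hypothesis closes the argument as in the classical proof.
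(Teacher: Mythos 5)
This theorem is quoted in the paper from \cite{PalII} (Theorem 1.13) and is not proved there, so there is no in-paper argument to compare against; your overall strategy of reducing to the Brehmer-type positivity conditions, which \cite{PalII} shows are equivalent to the existence of a regular $q$-unitary dilation, is certainly the intended route. However, your execution contains a genuine error: you take Brehmer's condition in the form $\Delta_u(\underline{T})=\sum_{v\subseteq u}(-1)^{|v|}\,T_v\,T_v^{*}\geq 0$ (with $T_v=\prod_{i\in v}T_i$), but the condition equivalent to a \emph{regular} unitary dilation in the sense of \eqref{eqn_reg} is the dual one, $\sum_{v\subseteq u}(-1)^{|v|}\,T_v^{*}\,T_v\geq 0$; the version you wrote characterizes a regular dilation of the adjoint tuple (equivalently, a $*$-regular dilation). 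The two are not interchangeable, and the mismatch is fatal precisely in case (1): for a merely ($q$-)commuting tuple of isometries your $\Delta_u$ need not be positive and certainly does not ``telescope to $\prod_{i\in u}(I-T_iT_i^{*})$'' --- take $T_1=T_2=S$ the unilateral shift and $h=e_1$, for which $\langle h,h\rangle-2\|S^{*}h\|^{2}+\|S^{*2}h\|^{2}=-1<0$. The factorization into commuting range projections requires \emph{doubly} commuting isometries, which is exactly what case (1) does not assume. With the correct form of the condition, case (1) is instead trivial: products of $q$-commuting isometries are isometries, so $T_v^{*}T_v=I$ and the alternating sum is $(1-1)^{|u|}I=0$.

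The same confusion propagates to case (3): your hypothesis gives $\sum_\alpha T_\alpha^{*}T_\alpha\leq I$, which controls $\|T_v h\|^{2}$, yet your proposed expansion of $\Delta_u$ produces the terms $\|T_v^{*}h\|^{2}$, so the ``peeling off'' inequality you need cannot be closed from the stated hypothesis. The classical S.~Nagy argument runs entirely with $\sum_{v\subseteq u}(-1)^{|v|}\|T_v h\|^{2}$, and that is the version you must $q$-deform. Case (2) survives either convention, since double $q$-commutation is stable under adjoints and your computation that the relevant positive operators commute is correct. Finally, a smaller but nontrivial gap: you wave at ``absorbing the unimodular $q$-factors so that the expression is self-adjoint,'' but identifying the precise $q$-corrected positivity condition that \cite{PalII} proves equivalent to \eqref{eqn_reg} is part of the content, and the cancellations you ``expect to vanish identically'' are exactly the computations that need to be carried out.
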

In this article, we primarily focus on $q$-commuting tuple of contractions with $\|q\|=1$. We generalize in Section \ref{sec_02} the following results of Ga\c spar-Suciu \cite{Gaspar} and Timotin \cite{Timotin} to  $q$-commuting tuples of contractions with $\|q\|=1$. 
	
	\begin{thm}[Ga\c spar-Suciu \cite{Gaspar} and Timotin \cite{Timotin}]\label{thm_104}
		Let $\underline{T}=(T_1, \dotsc, T_k)$ be a commuting tuple of contractions acting on a Hilbert space $\HS$ and let $\underline{V}=(V_1, \dotsc, V_k)$ be a minimal isometric dilation of $\underline{T}$. Then $\underline{V}$ is doubly commuting if and only if $\underline{V}$ is $*$-regular, i.e., 
		\begin{align}\label{eqn_102}
			\left[T_1^{m_1^+} \dotsc T_k^{m_k^+}\right]\left[T_1^{*m_1^-}\dotsc T_k^{*m_k^-}\right]= P_\mathcal{H}\left[V_1^{*m_1^-}\dotsc V_k^{*m_k^-}\right]\left[V_1^{m_1^+} \dotsc V_k^{m_k^+}\right]\bigg|_\mathcal{H}
		\end{align}
		for all $m_1, \dotsc, m_k \in \mathbb{Z}$. Here, $m^+=\max \{m, 0\}$ and $m^-=-\min\{m, 0\}$ for $m \in \mathbb{Z}$.  
	\end{thm}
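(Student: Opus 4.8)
The plan is to prove the theorem by establishing the two implications separately, working with the structure of a minimal isometric dilation. Throughout, write $\underline{V}=(V_1,\dots,V_k)$ for the minimal isometric dilation of $\underline{T}$ on $\mathcal{K}\supseteq\mathcal{H}$, and recall that minimality gives $\mathcal{K}=\ov{\text{span}}\{V_1^{m_1}\dotsc V_k^{m_k}h: m_j\in\Z_+,\, h\in\HS\}$, together with the standard fact that $\mathcal{H}$ is co-invariant, i.e.\ $V_j^*|_\HS = T_j^*$ for each $j$. The key elementary observation, valid since each $V_j$ is an isometry, is that for any operator word one has $P_\HS V_j^{*a} V_j^{a} = I_\HS$ on $\HS$ and more generally $P_\HS X^* V_j^{*a} V_j^a Y|_\HS = P_\HS X^* Y|_\HS$ whenever $X,Y$ map into the cyclic span; this is what lets one pull the $V_j^{*m_j^-}$ past $V_j^{m_j^+}$ and match \eqref{eqn_102}.

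First I would prove that $*$-regularity of $\underline V$ is equivalent to $\underline V$ being doubly commuting, i.e.\ $V_iV_j^*=V_j^*V_i$. For the direction ``doubly commuting $\Rightarrow$ $*$-regular'': if $\underline V$ is doubly commuting, then in the expression $P_\HS[V_1^{*m_1^-}\dotsc V_k^{*m_k^-}][V_1^{m_1^+}\dotsc V_k^{m_k^+}]|_\HS$ I can freely commute each adjoint block $V_i^{*m_i^-}$ to the right past the powers $V_j^{m_j^+}$ for $j\neq i$ (using $V_iV_j^*=V_j^*V_i$), collecting them next to $V_i^{m_i^+}$; since $V_i$ is an isometry, $V_i^{*m_i^-}V_i^{m_i^+}$ simplifies, and repeating this reduces the word to $V_1^{m_1^+}\dotsc V_k^{m_k^+}$ times $(V_1^{m_1^-}\dotsc V_k^{m_k^-})^*$ in the correct order, which compressed to $\HS$ gives exactly the left side of \eqref{eqn_102} via the dilation property. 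For the converse, $*$-regularity applied to judiciously chosen multi-indices (e.g.\ $m_i=1$, $m_j=-1$, all others zero, and exploiting that $\underline V$ is already commuting on the positive side) forces the cross relations $V_iV_j^*=V_j^*V_i$ on the cyclic generators, hence on all of $\mathcal{K}$; here I would use minimality to promote an identity that holds after compression to $\HS$ into an operator identity on $\mathcal{K}$, via a standard density argument pairing generic generators $V^{\underline n}h$ against $V^{\underline m}g$.

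Next I would connect this to doubly \emph{commuting} of $\underline T$ itself. The statement to prove is an equivalence between ``$\underline V$ doubly commuting'' and ``$\underline V$ $*$-regular'', so strictly the previous paragraph suffices; but to make the argument clean I would also record why $*$-regularity of $\underline V$ is independent of which minimal isometric dilation is chosen (all minimal ones are unitarily equivalent over $\HS$), so that \eqref{eqn_102} is genuinely a condition on $\underline T$. One should also note that $\underline V$ being doubly commuting forces $\underline T$ to be doubly commuting (compress $V_iV_j^*=V_j^*V_i$ to $\HS$, using $V_j^*|_\HS=T_j^*$ and $P_\HS V_i|_\HS=T_i$ once $\HS$ is semi-invariant), which ties the result to the doubly commuting hypothesis appearing in Theorem~\ref{thm_001}(2).

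The main obstacle I anticipate is the bookkeeping in the ``doubly commuting $\Rightarrow$ $*$-regular'' direction: one must carefully track how the scalar-free commutations interact with the splitting $m=m^+-m^-$ and verify that the partial cancellations $V_i^{*m_i^-}V_i^{m_i^+}$ never leave a stray negative power that fails to compress correctly — this is where the isometry property (not unitarity) of the $V_j$ is essential and where a sign or ordering slip would break the proof. The converse direction's obstacle is subtler: extracting the full operator identity $V_iV_j^*=V_j^*V_i$ on $\mathcal K$ from finitely many compressed identities requires the minimality/density argument to be set up correctly, and one must make sure the chosen test multi-indices actually isolate the commutator rather than a trivially satisfied relation. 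Since the present theorem is the model case $q_{ij}\equiv 1$ of the $q$-commuting generalization carried out in Section~\ref{sec_02}, I would organize the proof so that every step is stated in a form that survives insertion of the unimodular twisting factors $\prod q_{ij}^{\,\cdot}$.
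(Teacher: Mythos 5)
Your ``doubly commuting $\Rightarrow$ $*$-regular'' direction is essentially the argument the paper uses (in its $q$-twisted form, in the second half of the proof of Theorem~\ref{thm_mainI}): use double commutativity to move every block $V_i^{*m_i^-}$ to the right of every $V_j^{m_j^+}$, then use co-invariance of $\HS$ to replace $V_i^{*m_i^-}|_\HS$ by $T_i^{*m_i^-}$ (Lemma~\ref{lem_106}), and finally apply the dilation identity to the remaining positive word. One remark: the ``partial cancellation'' you worry about never arises, since $m_i^+m_i^-=0$ for every integer $m_i$, so for each index at most one of the two powers is nonzero and no genuine cancellation $V_i^{*a}V_i^{b}$ with $a,b>0$ ever occurs.

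The converse direction, however, has a genuine gap. Your plan --- test $*$-regularity at multi-indices like $m_i=1$, $m_j=-1$ and then ``promote'' the compressed identity to $V_iV_j^*=V_j^*V_i$ on $\mathcal K$ by pairing generators --- does not close, because $V_j^*$ does not act computably on the cyclic generators: for $\xi=V_1^{n_1}\dotsc V_k^{n_k}h$ with $n_j=0$, the vector $V_j^*\xi$ cannot be evaluated without first knowing how the orthogonal decomposition $\mathcal K=V_j\mathcal K\oplus\ker V_j^*$ sits relative to the generators, which is precisely the point at issue. The paper's proof (the $q\equiv 1$ case of Theorem~\ref{thm_mainI}, following Ga\c spar--Suciu and Timotin) supplies exactly this missing ingredient in three steps: (i) a technical computation (Lemma~\ref{lem_112}) using $*$-regularity at \emph{all} pairs of multi-indices to show $\la \underline{V}^mx,\underline{V}^\mu z\ra=\la \underline{V}^mV_\alpha T_\alpha^*x,\underline{V}^\mu z\ra$ whenever $m_\alpha=0$ and $\mu_\alpha\geq 1$; (ii) the resulting identification $\ker V_\alpha^*=\ov{\text{span}}\{\underline{V}^m(x-V_\alpha T_\alpha^*x): m_\alpha=0,\ x\in\HS\}$ together with $P_{\ker V_\alpha^*}(\underline{V}^mx)=\underline{V}^m(x-V_\alpha T_\alpha^*x)$; and (iii) the observation that $V_\beta$ maps this spanning set into $\ker V_\alpha^*$, so that Lemma~\ref{lem_101} (double commutativity of a pair of isometries is equivalent to $V_\beta\ker V_\alpha^*\subseteq\ker V_\alpha^*$) yields the cross relations. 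None of these steps is present in your proposal, and without the kernel characterization the density argument you invoke has nothing to act on. Your side remarks (uniqueness of the minimal dilation up to unitary equivalence over $\HS$, and that double commutativity of $\underline V$ passes to $\underline T$ by compression) are correct but are not needed for, and do not substitute for, this argument.
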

 As an application of Theorem \ref{thm_104}, we have the following proposition due to Ga\c spar-Suciu \cite{Gaspar}. 
	
	\begin{prop}[\cite{Gaspar}, Proposition 0]\label{prop_105}
		Let $\underline{T}=(T_1, \dotsc, T_k)$ be a commuting tuple of contractions acting on a Hilbert space $\HS$. Then the following statements are equivalent:
		\begin{enumerate}
			\item $\underline{T}$ is doubly commuting;
			\item $\underline{T}$ admits a minimal regular isometric dilation which is doubly commuting;
			\item $\underline{T}$ admits a minimal regular unitary dilation $\underline{U} = (U_1, \ldots, U_k)$ such that $(U_1^*, \dotsc, U_k^*)$ is a minimal regular unitary dilation for $(T_1^*, \dotsc, T_k^*)$.
		\end{enumerate}
	\end{prop}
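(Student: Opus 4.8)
The plan is to obtain the three equivalences from Theorem~\ref{thm_104} and Theorem~\ref{thm_001}(2), using repeatedly the elementary remark that for a multi-index $\underline m=(m_1,\dotsc,m_k)\in\Z^k$ the set of indices $i$ with $m_i^+\ne 0$ is disjoint from the set of indices $i$ with $m_i^-\ne 0$; consequently, interchanging a block $T_1^{m_1^+}\dotsc T_k^{m_k^+}$ with a block $(T_1^{m_1^-})^{*}\dotsc(T_k^{m_k^-})^{*}$, and likewise for the analogous blocks formed from the $V_i$ or the $U_i$, never requires commuting an operator with its own adjoint but only commuting $T_i$ past $T_j^{*}$ for $i\ne j$, i.e.\ uses only \emph{double} commutativity. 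I would prove $(1)\Leftrightarrow(3)$ and then $(1)\Leftrightarrow(2)$.

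For $(1)\Rightarrow(3)$: by Theorem~\ref{thm_001}(2) applied with $q_{ij}=1$, a doubly commuting $\underline T$ admits a regular unitary dilation, and restricting it to the cyclic subspace $\ov{\text{span}}\{U_1^{m_1}\dotsc U_k^{m_k}h:\underline m\in\Z^k,\ h\in\HS\}$ produces a minimal one $\underline U$. Putting $\underline m=-\underline n$ in the defining identity \eqref{eqn_reg} (with $q_{ij}=1$) of $\underline U$ and using that $(U_1^{*})^{n_1}\dotsc(U_k^{*})^{n_k}=U_1^{-n_1}\dotsc U_k^{-n_k}$ for commuting unitaries, one gets
\[
P_{\HS}(U_1^{*})^{n_1}\dotsc(U_k^{*})^{n_k}|_{\HS}=(T_1^{n_1^+})^{*}\dotsc(T_k^{n_k^+})^{*}\,T_1^{n_1^-}\dotsc T_k^{n_k^-}\qquad(\underline n\in\Z^k),
\]
and moving the $T^{*}$-block past the $T$-block (legitimate since $\underline T$ is doubly commuting) rewrites the right-hand side as the defining expression of a regular unitary dilation of $(T_1^{*},\dotsc,T_k^{*})$. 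Hence $(U_1^{*},\dotsc,U_k^{*})$ is a regular unitary dilation of $(T_1^{*},\dotsc,T_k^{*})$, and it is minimal because the defining span is unchanged under $\underline m\mapsto-\underline m$. For $(3)\Rightarrow(1)$: the displayed identity holds for $\underline U$ (it is just \eqref{eqn_reg} at $-\underline n$), while the hypothesis that $(U_1^{*},\dotsc,U_k^{*})$ is a regular unitary dilation of $(T_1^{*},\dotsc,T_k^{*})$ says the same operator equals $T_1^{n_1^-}\dotsc T_k^{n_k^-}(T_1^{n_1^+})^{*}\dotsc(T_k^{n_k^+})^{*}$; equating and specializing to $n_i=1$, $n_j=-1$ with all other entries $0$ yields $T_i^{*}T_j=T_jT_i^{*}$ for each pair $i\ne j$, which together with the commutativity of $\underline T$ is precisely double commutativity.

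For $(2)\Rightarrow(1)$: let $\underline V$ be a minimal regular isometric dilation of $\underline T$ that is doubly commuting. Being a doubly commuting minimal isometric dilation, $\underline V$ satisfies the $*$-regular identity \eqref{eqn_102} by Theorem~\ref{thm_104}; being moreover regular, it satisfies the defining identity of a regular isometric dilation, which for a pair $i<j$ and the multi-index with $m_i=1$, $m_j=-1$ and all other entries $0$ reads $T_j^{*}T_i=P_{\HS}V_iV_j^{*}|_{\HS}$, whereas \eqref{eqn_102} at the same multi-index reads $T_iT_j^{*}=P_{\HS}V_j^{*}V_i|_{\HS}$. Since $\underline V$ is doubly commuting, $V_iV_j^{*}=V_j^{*}V_i$, so the two right-hand sides coincide and $T_iT_j^{*}=T_j^{*}T_i$; taking adjoints supplies the remaining relations, so $\underline T$ is doubly commuting.

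Finally, $(1)\Rightarrow(2)$ is the step I expect to be the crux: Theorem~\ref{thm_104} tells us when a minimal isometric dilation is doubly commuting, but not that one exists. So I would first use the (classical, and in any case recovered from the results of Section~\ref{sec_02}) fact that a doubly commuting tuple of contractions admits a minimal isometric dilation $\underline V$ that is again doubly commuting, obtained by dilating $T_1$ to its Sz.-Nagy minimal isometric dilation, extending the remaining contractions so that they stay doubly commuting with the new isometry and with one another, and iterating. Once $\underline V$ is doubly commuting, Theorem~\ref{thm_104} supplies \eqref{eqn_102}, and rearranging the $T$-blocks (by double commutativity of $\underline T$) and the $V$-blocks (by double commutativity of $\underline V$) converts \eqref{eqn_102} into the defining identity of a regular isometric dilation; thus $\underline V$ is simultaneously minimal, regular, and doubly commuting, which is $(2)$. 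Beyond this existence input, the argument is just the careful bookkeeping of operator orderings, kept in check by the disjointness remark recorded at the start.
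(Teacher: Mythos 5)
Your argument is correct in substance and uses the same ingredients the paper deploys for the $q$-analog (Theorem~\ref{thm_mainII}), whose proof is the only one the paper actually supplies --- Proposition~\ref{prop_105} itself is quoted from \cite{Gaspar} without proof. Specifically, you and the paper both rest on Theorem~\ref{thm_001}(2) for existence, on the observation that swapping a $T_1^{m_1^+}\dotsc T_k^{m_k^+}$-block past a $T_1^{*m_1^-}\dotsc T_k^{*m_k^-}$-block only ever uses double commutativity (this is Lemma~\ref{lem_107} with all $q_{ij}=1$), and on Theorem~\ref{thm_104} to pass between $*$-regularity and double commutativity of the dilation. The genuine divergence is architectural: the paper proves the cycle $(1)\Rightarrow(3)\Rightarrow(2)\Rightarrow(1)$, obtaining the doubly commuting minimal regular isometric dilation in $(3)\Rightarrow(2)$ essentially for free by restricting the minimal regular unitary dilation to $\mathcal{K}_+=\ov{\text{span}}\{U_1^{m_1}\dotsc U_k^{m_k}h : m_i\in\Z_+,\ h\in\HS\}$ (Corollary~\ref{cor_112}) and checking $*$-regularity of the restriction, whereas you prove $(1)\Leftrightarrow(3)$ and $(1)\Leftrightarrow(2)$ separately and are therefore forced to manufacture a doubly commuting minimal isometric dilation from scratch in $(1)\Rightarrow(2)$. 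That is your one soft spot: the iterated Sz.-Nagy construction you sketch (``extend the remaining contractions so that they stay doubly commuting \ldots and iterate'') is a classical but nontrivial fact that you do not verify; the restriction route makes it a consequence of material you have already established, and indeed your own $(1)\Rightarrow(3)$ followed by the paper's $(3)\Rightarrow(2)$ closes this gap with no new construction. Two smaller remarks: in $(2)\Rightarrow(1)$ the regular and $*$-regular identities have literally the same right-hand side $P_\HS V_1^{*m_1^-}\dotsc V_k^{*m_k^-}V_1^{m_1^+}\dotsc V_k^{m_k^+}|_\HS$, so $T^{*m^-}$-- and $T^{m^+}$--blocks commute without invoking double commutativity of $\underline V$ at all (your version writes one side as $P_\HS V_iV_j^*|_\HS$, which is not what the definition gives, and then repairs the order using $V_iV_j^*=V_j^*V_i$ --- harmless but unnecessary); and your justification of minimality of $(U_1^*,\dotsc,U_k^*)$ via invariance of the spanning set under $\underline m\mapsto-\underline m$ is exactly the right observation.
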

	
These seminal results due to Ga\c spar-Suciu \cite{Gaspar} and Timotin \cite{Timotin} explore the interplay between doubly commuting contractions and the notion of $*$-regularity. The desired generalization of these results in the $q$-commuting setting involve defining a suitable analog of $*$-regularity for a $q$-commuting tuple of contractions with $\|q\|=1$, followed by a series of long computations based on the works of Ga\c spar-Suciu \cite{Gaspar} and Timotin \cite{Timotin}. As an application, we obtain a structure theorem for $q$-commuting tuple of contractions with $\|q\|=1$ having a regular $q$-unitary dilation. 
	
	\section{Regular dilations and $*$-regular dilations in the $q$-commuting setting}\label{sec_02}
	
	\noindent An analog of regular unitary dilation for a $q$-commuting tuple of contractions with $\|q\|=1$ was introduced by the authors of \cite{PalII}. Let $\underline{T}=(T_1, \dotsc, T_k)$ be a $q$-commuting tuple of contractions with $\|q\|=1$ acting on a Hilbert space $\HS$. A $q$-unitary dilation $\underline{U}=(U_1, \dotsc, U_k)$ of $\underline{T}$ is said to be regular, if it satisfies the condition \eqref{eqn_reg} or equivalently,
	\[
	\left[T_1^{*m_1^-}\dotsc T_k^{*m_k^-}\right]\left[T_1^{m_1^+} \dotsc T_k^{m_k^+}\right]=P_\mathcal{H}\left[U_1^{*m_1^-}\dotsc U_k^{*m_k^-}\right]\left[U_1^{m_1^+} \dotsc U_k^{m_k^+}\right]\bigg|_\mathcal{H}
	\] 
	for all $m_1, \dotsc, m_k \in \mathbb{Z}$. As mentioned earlier, $m^+=\max \{m, 0\}$ and $m^-=-\min\{m, 0\}$ for $m \in \mathbb{Z}$. Building on the definition of regular $q$-unitary dilation, we now define regular $q$-isometric dilation.
	
	\begin{defn}
		Let $\underline{T}=(T_1, \dotsc, T_k)$ be a $q$-commuting tuple of contractions with $\|q\|=1$ acting on a Hilbert space $\HS$. A $q$-isometric dilation $\underline{V}=(V_1, \dotsc, V_k)$ is said to be \textit{regular}, if it satisfies the condition 
		\begin{align*}
			\left[T_1^{*m_1^-}\dotsc T_k^{*m_k^-}\right]\left[T_1^{m_1^+} \dotsc T_k^{m_k^+}\right]=P_\mathcal{H}\left[V_1^{*m_1^-}\dotsc V_k^{*m_k^-}\right]\left[V_1^{m_1^+} \dotsc V_k^{m_k^+}\right]\bigg|_\mathcal{H}
		\end{align*}
		for all $m_1, \dotsc, m_k \in \mathbb{Z}$.
	\end{defn}
	
	We recall from the literature \cite{Gaspar, Timotin} that an isometric (unitary) minimal dilation $\underline{V}=(V_1, \dotsc, V_k)$ of $\underline{T}=(T_1, \dotsc, T_k)$ is $*$-regular if  the condition \eqref{eqn_reg} holds. It is evident that $\underline{V}$ is a minimal regular unitary dilation of $\underline{T}$ if and only if $(V_1^*, \dotsc, V_k^*)$ is a minimal $*$-regular unitary dilation of $(T_1^*, \dotsc, T_k^*)$, i.e.,
	\begin{align}\label{eqn_*reg2}
		\ \left[T_1^{*m_1^+}\dotsc T_k^{*m_k^+}\right]\left[T_1^{m_1^-} \dotsc T_k^{m_k^-}\right]
		=P_\mathcal{H}\left[V_1^{m_1^-}\dotsc V_k^{m_k^-}\right]\left[V_1^{*m_1^+} \dotsc V_k^{*m_k^+}\right]\bigg|_\mathcal{H}
	\end{align}
	for all $m_1, \dotsc, m_k \in \mathbb{Z}$. The above observation plays a key role in defining a suitable analog of $*$-regularity for $q$-commuting tuple of contractions with $\|q\|=1$. To do so, we generalize \eqref{eqn_*reg2} in the $q$-commuting setting. We begin with the following lemma. 
	
	\begin{lem}\label{lem_107}
		Let $\underline{T}=(T_1, \dotsc, T_k)$ be a doubly $q$-commuting tuple of contractions. Then
		\[
		\left[T_1^{*m_1^-}\dotsc T_k^{*m_k^-}\right]\left[T_1^{m_1^+} \dotsc T_k^{m_k^+}\right]=\underset{1\leq i<j \leq k}{\prod}q_{ij}^{-m_i^-m_j^++m_i^+m_j^-} \left[T_1^{m_1^+} \dotsc T_k^{m_k^+}\right]\left[T_1^{*m_1^-}\dotsc T_k^{*m_k^-}\right].
		\]
	\end{lem}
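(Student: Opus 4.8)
\emph{Approach.} The plan is to reduce the identity to a mechanical rearrangement of a word in $T_1, \dotsc, T_k$ and their adjoints, using only the commutation relations built into the doubly $q$-commuting hypothesis together with the fact that $m_i^+ m_i^- = 0$ for every $i$. First I would record the relation needed for adjoints: starting from the doubly $q$-commuting relation $T_i T_j^* = \overline{q}_{ij} T_j^* T_i$ (stated for $i < j$), taking adjoints and using $|q_{ij}| = 1$ and $q_{ij} = q_{ji}^{-1}$, one arrives at the single uniform relation
\[
T_i^* T_j = \overline{q}_{ij}\, T_j T_i^* \qquad (i \ne j),
\]
where for $i > j$ one invokes $\overline{q}_{ij} = q_{ji}$. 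Iterating gives $T_i^{*p} T_j^{s} = \overline{q}_{ij}^{\,ps}\, T_j^{s} T_i^{*p}$ whenever $i \ne j$ and $p, s \in \Z_+$.

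\emph{The rearrangement.} Put $a_i = m_i^-$ and $b_i = m_i^+$, so that for each $i$ at least one of $T_i^{*a_i}$ and $T_i^{b_i}$ is the identity operator. Starting from $T_1^{*a_1}\dotsc T_k^{*a_k}\,T_1^{b_1}\dotsc T_k^{b_k}$, I would transport $T_i^{*a_i}$ rightward across the whole product $T_1^{b_1}\dotsc T_k^{b_k}$, successively for $i = k, k-1, \dotsc, 1$, stopping each one just to the left of the starred factors $T_{i+1}^{*a_{i+1}}\dotsc T_k^{*a_k}$ already moved. By the relation above, pushing $T_i^{*a_i}$ past $T_j^{b_j}$ with $j \ne i$ produces the scalar $\overline{q}_{ij}^{\,a_i b_j}$, whereas the passage ``past $T_i^{b_i}$'' is vacuous since one of those two factors equals the identity -- this is precisely the point at which $m_i^+ m_i^- = 0$ is used, and it is what makes the rearrangement legitimate even though $T_i$ need not be normal. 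After the $k$ sweeps the word has become $T_1^{b_1}\dotsc T_k^{b_k}\,T_1^{*a_1}\dotsc T_k^{*a_k}$, multiplied by the accumulated scalar $\prod_{i \ne j}\overline{q}_{ij}^{\,a_i b_j}$.

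\emph{Identifying the constant.} It then remains to check that $\prod_{i \ne j}\overline{q}_{ij}^{\,a_i b_j}$ equals $\prod_{1 \le i < j \le k} q_{ij}^{-m_i^- m_j^+ + m_i^+ m_j^-}$. Splitting off the part of the product with $i > j$, relabelling $i \leftrightarrow j$ and using $\overline{q}_{ij} = q_{ji}$, that part becomes $\prod_{i < j} q_{ij}^{\,a_j b_i}$; combining this with $\overline{q}_{ij} = q_{ij}^{-1}$ in the part with $i < j$ gives $\prod_{i < j} q_{ij}^{-a_i b_j + a_j b_i}$, which is the asserted coefficient.

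\emph{Main obstacle.} There is no conceptual difficulty here: the only genuine hazard is bookkeeping -- keeping the internal order of the two blocks fixed during the sweep and carrying out the $\overline{q}_{ij}$-versus-$q_{ij}$ (equivalently $q_{ij}$-versus-$q_{ji}$) conversions consistently in the final step. An alternative that may read more cleanly is induction on $k$: use the relation of the first step to push $T_k^{*m_k^-}$ to the far right and move $T_k^{m_k^+}$ into place, apply the inductive hypothesis to $(T_1, \dotsc, T_{k-1})$, and note that the base case $k = 1$ is immediate because one of $T_1^{*m_1^-}, T_1^{m_1^+}$ is the identity.
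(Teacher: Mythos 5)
Your proposal is correct and rests on exactly the same two ingredients as the paper's proof: the derived relation $T_i^*T_j=\overline{q}_{ij}T_jT_i^*$ obtained by taking adjoints in the doubly $q$-commuting identities, and the observation that $m_i^+m_i^-=0$ makes the crossing of $T_i^{*m_i^-}$ past $T_i^{m_i^+}$ trivial. The paper carries out the $k=2$ rearrangement explicitly and appeals to induction for general $k$, whereas you perform the general sweep directly and verify the accumulated scalar combinatorially; this is only a presentational difference.
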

	
	\begin{proof}
		For simplicity, we first prove the result for $k=2$. Let $(T_1, T_2)$ be a doubly $q$-commuting pair of contractions. Suppose $T_1T_2=q_{12}T_2T_1$ and $T_1T_2^*=\overline{q}_{12}T_2^*T_1$ for some $q_{12} \in \mathbb{T}$. For an operator $T$, it is clear that $T^{*m^-}T^{m^+}=T^{m^+}T^{*m^-}$ for all $m \in \Z$. By $q$-commutativity, we have that
		\begin{align*}
			T_1^{*m_1^-}\left(T_2^{*m_2^-}T_1^{m_1^+}\right)T_2^{m_2^+}
			=q_{12}^{m_1^+m_2^-}T_1^{m_1^+}T_1^{*m_1^-}T_2^{m_2^+}T_2^{*m_2^-} 	
			=q_{12}^{m_1^+m_2^-}q_{12}^{-m_1^-m_2^+}T_1^{m_1^+}T_2^{m_2^+}T_1^{*m_1^-}T_2^{*m_2^-}.
		\end{align*}		
		The general case now follows from the mathematical induction. 
	\end{proof}
	
	Let $\underline{U}=(U_1, \dotsc, U_k)$ be a minimal regular $q$-unitary dilation of a $q$-commuting tuple $\underline{T}=(T_1, \dotsc, T_k)$ of contractions with $\|q\|=1$ acting on a Hilbert space $\HS$. It is evident that  $(U_1^*, \dotsc, U_k^*)$ is a minimal $q$-unitary dilation of $(T_1^*, \dotsc, T_k^*)$. By regular $q$-unitary  dilation condition, we have 
	\begin{align}\label{eqn_102}
		\left[T_1^{*m_1^-}\dotsc T_k^{*m_k^-}\right]\left[T_1^{m_1^+} \dotsc T_k^{m_k^+}\right]=P_\mathcal{H}\left[U_1^{*m_1^-}\dotsc U_k^{*m_k^-}\right]\left[U_1^{m_1^+} \dotsc U_k^{m_k^+}\right]\bigg|_\mathcal{H}
	\end{align}
	for all $m_1, \dotsc, m_k \in \mathbb{Z}$. Clearly, $(-n)^-=n^+$ for all $n \in \Z$.  Let $m_1, \dotsc, m_k \in \Z$. Then
	\begin{align*}
		\ \left[T_1^{*m_1^+}\dotsc T_k^{*m_k^+}\right]\left[T_1^{m_1^-} \dotsc T_k^{m_k^-}\right]
		&=P_\mathcal{H}\left[U_1^{*m_1^+}\dotsc U_k^{*m_k^+}\right]\left[U_1^{m_1^-} \dotsc U_k^{m_k^-}\right]\bigg|_\mathcal{H} \qquad  [\text{by} \ \eqref{eqn_102}] \\
		&=\underset{1\leq i<j \leq k}{\prod}q_{ij}^{m_i^-m_j^+-m_i^+m_j^-} P_\mathcal{H}\left[U_1^{m_1^-}\dotsc U_k^{m_k^-}\right]\left[U_1^{*m_1^+} \dotsc U_k^{*m_k^+}\right]\bigg|_\mathcal{H},
	\end{align*}
	where the last equality follows from Lemma \ref{lem_107}. Thus, we obtain a generalization of \eqref{eqn_*reg2} in the $q$-commuting setting. This motivates us to put forth the following definition.
	
	\begin{defn}\label{defn_*-reg}
		Let $\underline{T}=(T_1, \dotsc, T_k)$ be a $q$-commuting tuple of contractions with $\|q\|=1$  acting on a Hilbert space $\HS$. A $q$-isometric dilation $\underline{V}=(V_1, \dotsc, V_k)$ is said to be \textit{$*$-regular}, if it satisfies the condition 
		\[
		\left[T_1^{m_1^+} \dotsc T_k^{m_k^+}\right]\left[T_1^{*m_1^-}\dotsc T_k^{*m_k^-}\right]=\underset{1\leq i<j \leq k}{\prod}q_{ij}^{m_i^-m_j^+-m_i^+m_j^-} P_\mathcal{H}\left[V_1^{*m_1^-}\dotsc V_k^{*m_k^-}\right]\left[V_1^{m_1^+} \dotsc V_k^{m_k^+}\right]\bigg|_\mathcal{H}
		\] 
		for all $m_1, \dotsc, m_k \in \mathbb{Z}$.  
	\end{defn}	
	
	From the discussion preceding Definition \ref{defn_*-reg}, we obtain the following lemma.
	
	\begin{lem}\label{lem_109}
		Let $\underline{T}=(T_1, \dotsc, T_k)$ be a $q$-commuting tuple of contractions with $\|q\|=1$ acting on a Hilbert space $\HS$. A $q$-commuting tuple of unitaries $\underline{U}=(U_1, \dotsc, U_k)$ is a minimal regular $q$-unitary  dilation of $\underline{T}$ if and only if $(U_1^*, \dotsc, U_k^*)$ is a minimal $*$-regular $q$-unitary dilation of $(T_1^*, \dotsc, T_k^*)$.
	\end{lem}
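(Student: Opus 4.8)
The plan is to prove Lemma \ref{lem_109} by turning the chain of equalities in the discussion preceding Definition \ref{defn_*-reg} into a chain of \emph{equivalences}. I would begin with the purely algebraic preliminaries that make the adjoint correspondence work. Since $\underline{U}=(U_1,\dots,U_k)$ consists of unitaries, the relation $U_iU_j=q_{ij}U_jU_i$ together with $U_j^*=U_j^{-1}$ forces $U_iU_j^*=\overline{q}_{ij}U_j^*U_i$; thus $\underline{U}$ is automatically doubly $q$-commuting, and consequently so is $(U_1^*,\dots,U_k^*)$, with the \emph{same} scalar family $q=\{q_{ij}\}$. Taking adjoints in the dilation identity $T_1^{m_1}\dotsc T_k^{m_k}=P_\HS U_1^{m_1}\dotsc U_k^{m_k}|_\HS$ (legitimate for all $m_i\in\Z$ because the $U_i$ are unitaries) and re-ordering the resulting products via $q$-commutativity — an operation that introduces the \emph{same} modulus-one scalar on the two sides, since the $T_i^*$ and the $U_i^*$ satisfy identical relations — shows that $\underline{U}$ is a $q$-unitary dilation of $\underline{T}$ if and only if $(U_1^*,\dots,U_k^*)$ is a $q$-unitary dilation of $(T_1^*,\dots,T_k^*)$; and since the minimality span in each case ranges over all exponent vectors in $\Z^k$, minimality transfers as well. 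This reduces the lemma to the assertion that, under this correspondence, the regular $q$-unitary dilation condition for $\underline{T}$ is equivalent to the $*$-regular $q$-unitary dilation condition for $(T_1^*,\dots,T_k^*)$.

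For that equivalence I would run precisely the computation performed just before Definition \ref{defn_*-reg}. Starting from the regular $q$-unitary dilation identity \eqref{eqn_reg} for $\underline{U}$ and substituting $(m_1,\dots,m_k)\mapsto(-m_1,\dots,-m_k)$, using $(-n)^+=n^-$ and $(-n)^-=n^+$, gives
\[
\left[T_1^{*m_1^+}\dotsc T_k^{*m_k^+}\right]\left[T_1^{m_1^-}\dotsc T_k^{m_k^-}\right]
=P_\HS\left[U_1^{*m_1^+}\dotsc U_k^{*m_k^+}\right]\left[U_1^{m_1^-}\dotsc U_k^{m_k^-}\right]\big|_\HS .
\]
Applying Lemma \ref{lem_107} to the doubly $q$-commuting tuple of unitaries $\underline{U}$ with the index vector $(-m_1,\dots,-m_k)$ — so that the roles of $m_i^+$ and $m_i^-$ are interchanged — rewrites the right-hand side as
\[
\underset{1\leq i<j\leq k}{\prod}q_{ij}^{\,m_i^-m_j^+-m_i^+m_j^-}\;P_\HS\left[U_1^{m_1^-}\dotsc U_k^{m_k^-}\right]\left[U_1^{*m_1^+}\dotsc U_k^{*m_k^+}\right]\big|_\HS .
\]
Writing $S_i=T_i^*$ and $W_i=U_i^*$, and recalling from the preliminaries that the $q$-family of $(S_1,\dots,S_k)$ is again $\{q_{ij}\}$, this last identity is exactly the defining identity of Definition \ref{defn_*-reg} for the tuple $(S_1,\dots,S_k)$ with dilation $(W_1,\dots,W_k)$. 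This proves the forward implication.

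The converse requires no new idea: the substitution $(m_i)\mapsto(-m_i)$ is an involution on $\Z^k$, Lemma \ref{lem_107} is an identity valid in both directions for the doubly $q$-commuting tuple $\underline{U}$, and the preliminaries on adjoints and on minimality are already equivalences; so reading the above chain backwards shows that if $(U_1^*,\dots,U_k^*)$ is a minimal $*$-regular $q$-unitary dilation of $(T_1^*,\dots,T_k^*)$, then $\underline{U}$ is a minimal regular $q$-unitary dilation of $\underline{T}$. I expect the whole argument to be routine; the only point that needs genuine care — and the reason Definition \ref{defn_*-reg} carries the specific prefactor $\prod_{i<j}q_{ij}^{\,m_i^-m_j^+-m_i^+m_j^-}$ — is verifying that the scalar produced by Lemma \ref{lem_107} after the index reversal matches that prefactor exactly, and that no stray conjugation of the $q_{ij}$ appears in passing from $\underline{T}$ to $(T_1^*,\dots,T_k^*)$. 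Thus the main (and modest) obstacle is the bookkeeping of exponents and signs under $(m_i)\mapsto(-m_i)$; there is no analytic difficulty.
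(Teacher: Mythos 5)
Your proposal is correct and follows essentially the same route as the paper: the paper's proof of Lemma \ref{lem_109} is exactly the computation in the discussion preceding Definition \ref{defn_*-reg} (substitute $m\mapsto -m$ in the regularity identity and apply Lemma \ref{lem_107} to the automatically doubly $q$-commuting unitary tuple to produce the prefactor $\prod_{i<j}q_{ij}^{m_i^-m_j^+-m_i^+m_j^-}$), with the converse and the transfer of minimality left implicit. Your additional checks — that $(T_1^*,\dots,T_k^*)$ carries the same scalar family $q$ and that each step is reversible — are exactly the right bookkeeping and agree with the paper.
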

	
	The following lemmas summarize basic facts about $q$-commuting contractions with $\|q\|=1$.
	
	\begin{lem}\label{lem_101}
		A $q$-commuting pair of isometries $(V_1, V_2)$ with $\|q\|=1$  is doubly $q$-commuting if and only if $V_1 \text{ker}V_2^* \subseteq \text{ker}V_2^*$. 
	\end{lem}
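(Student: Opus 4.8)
The plan is to prove the equivalence by direct computation, exploiting the structure of a minimal isometric dilation of a single contraction applied to the coordinate $V_2$. First I would record the standard fact that for a $q$-commuting pair of isometries $(V_1,V_2)$ we always have $V_1 V_2 = q_{12} V_2 V_1$, hence $V_2^* V_1^* = \overline{q}_{12}\, V_1^* V_2^*$, and consequently $V_2^* V_1 = \overline{q}_{12}\, V_1 V_2^*$ \emph{on} $\operatorname{ran} V_2 = (\ker V_2^*)^\perp$; this is where one sees that double $q$-commutativity, i.e. $V_1 V_2^* = \overline{q}_{12}\, V_2^* V_1$ globally, is an extra condition beyond $q$-commutativity. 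Equivalently, since $V_2$ is an isometry, $V_2 V_2^*$ is the orthogonal projection onto $(\ker V_2^*)^\perp$, and $I - V_2 V_2^*$ is the projection onto $\ker V_2^*$.

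For the forward direction, assume $(V_1,V_2)$ is doubly $q$-commuting, so $V_1 V_2^* = \overline{q}_{12}\, V_2^* V_1$. Take $h \in \ker V_2^*$; then $V_2^* V_1 h = q_{12}\, V_1 V_2^* h = 0$, so $V_1 h \in \ker V_2^*$, giving $V_1 \ker V_2^* \subseteq \ker V_2^*$. For the converse, assume $V_1 \ker V_2^* \subseteq \ker V_2^*$, equivalently $V_1 (I - V_2 V_2^*) = (I - V_2 V_2^*) V_1 (I - V_2 V_2^*)$. I would then compute $V_2^* V_1$ by splitting the domain into $\ker V_2^*$ and $(\ker V_2^*)^\perp = \operatorname{ran} V_2$. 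On $\ker V_2^*$ the operator $V_2^* V_1$ vanishes by hypothesis; on a vector of the form $V_2 g$, the $q$-commutation $V_2^* V_1 V_2 = \overline{q}_{12}\, V_2^* V_2 V_1 = \overline{q}_{12}\, V_1$ (using $V_1 V_2 = q_{12} V_2 V_1$ and $V_2^* V_2 = I$) shows $V_2^* V_1 V_2 g = \overline{q}_{12}\, V_1 g$. I must also check that $\overline{q}_{12}\, V_2^* V_1$ behaves the same way: on $\ker V_2^*$ it clearly kills the vector, and on $V_2 g$ it equals $\overline{q}_{12}\, V_1 g$ by the same identity — wait, this only recovers $V_2^* V_1 = \overline{q}_{12}\, V_2^* V_1 V_2 V_2^*$, which is automatic. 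The genuine content is to show $V_1 V_2^* = \overline{q}_{12}\, V_2^* V_1$; I would instead take adjoints and show $V_2 V_1^* = q_{12}\, V_1^* V_2$, testing both sides against an arbitrary vector decomposed along $\operatorname{ran} V_2 \oplus \ker V_2^*$ and using that $V_1^*$ maps $\ker V_2^*$ into $\ker V_2^*$ is \emph{not} what we assumed — rather, the hypothesis $V_1 \ker V_2^* \subseteq \ker V_2^*$ translates, after taking orthogonal complements and adjoints, into $V_1^* \operatorname{ran} V_2 \subseteq \operatorname{ran} V_2$, i.e. $V_1^* V_2 V_2^* = V_2 V_2^* V_1^* V_2 V_2^*$; combining this with $V_1 V_2 = q_{12} V_2 V_1$ gives the desired double $q$-commutation after a short manipulation with the projection $V_2 V_2^*$.

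The main obstacle I anticipate is the bookkeeping in the converse direction: one has the \emph{one-sided} containment $V_1 \ker V_2^* \subseteq \ker V_2^*$ but needs a \emph{two-sided} intertwining relation, so the argument must carefully use that $V_1$ is an isometry (so it preserves orthogonality and $V_1^* V_1 = I$) together with the $q$-commutation relation to "transport" the invariance of $\ker V_2^*$ under $V_1$ into invariance of $\operatorname{ran} V_2$ under $V_1^*$, and then assemble the global identity $V_1 V_2^* = \overline{q}_{12}\, V_2^* V_1$ by checking equality on the two pieces of the orthogonal decomposition $\mathcal{H} = \operatorname{ran} V_2 \oplus \ker V_2^*$. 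Once the relation holds on each summand and both sides are bounded, it holds on all of $\mathcal{H}$, completing the proof.
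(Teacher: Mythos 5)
Your forward direction is exactly the paper's: from $V_1V_2^*=\overline{q}_{12}V_2^*V_1$ one gets $V_2^*V_1h=q_{12}V_1V_2^*h=0$ for $h\in\ker V_2^*$. The converse, however, is not completed as written, even though the device you reach for --- the orthogonal decomposition $\mathcal{H}=\operatorname{ran}V_2\oplus\ker V_2^*$ --- is precisely the one the paper uses. The reason your first attempt collapses into something ``automatic'' is that you are comparing $V_2^*V_1$ against $\overline{q}_{12}V_2^*V_1$, a scalar multiple of itself; the operator you must match it against is $q_{12}V_1V_2^*$, since the target identity $V_1V_2^*=\overline{q}_{12}V_2^*V_1$ is equivalent to $V_2^*V_1=q_{12}V_1V_2^*$. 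Once the comparison is set up this way, both summands are immediate: on $\ker V_2^*$ both operators vanish ($V_2^*V_1$ by the hypothesis $V_1\ker V_2^*\subseteq\ker V_2^*$, and $V_1V_2^*$ trivially), while on a vector $V_2g$ one has $V_2^*V_1V_2g=q_{12}V_2^*V_2V_1g=q_{12}V_1g=q_{12}V_1V_2^*(V_2g)$. Note also the conjugate slip in your computation: with the convention $V_1V_2=q_{12}V_2V_1$ one gets $V_2^*V_1V_2=q_{12}V_1$, not $\overline{q}_{12}V_1$, and getting this unimodular constant right is the entire content of the ``doubly $q$-commuting'' conclusion.

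The subsequent detour --- passing to $V_1^*\operatorname{ran}V_2\subseteq\operatorname{ran}V_2$ and promising ``a short manipulation with the projection $V_2V_2^*$'' --- is where the actual proof is deferred rather than given. That route can be made to work (the hypothesis says $V_2^*V_1(I-V_2V_2^*)=0$, whence $V_2^*V_1=V_2^*V_1V_2V_2^*=q_{12}V_1V_2^*$ directly), but as submitted the key identity is asserted, not derived. Your worry that the one-sided containment is too weak is unfounded: no invariance of $\operatorname{ran}V_2$ under $V_1^*$ is needed, only the vanishing of $V_2^*V_1$ on $\ker V_2^*$ together with $V_2^*V_2=I$. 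Repairing the comparison as above recovers the paper's proof essentially verbatim.
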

	
	\begin{proof}
		Let $(V_1, V_2)$ be a doubly $q$-commuting pair of isometries acting on a Hilbert space $\HS$ and let $q_{12} \in \mathbb{T}$ be such that $V_1V_2=q_{12}V_2V_1$. If $V_1V_2^*=\overline{q}_{12}V_2^*V_1$, then $V_2^*V_1x=q_{12}V_1V_2^*x=0$ for all $x \in \text{ker}V_2^*$ and so, $V_1 \text{ker}V_2^* \subseteq \text{ker}V_2^*$. Conversely, assume that $V_1 \text{ker}V_2^* \subseteq \text{ker}V_2^*$. Since $V_2$ is an isometry, we can write $\HS=\text{ran}V_2 \oplus \text{ker}V_2^*$. Let $x \in \HS$. One can find $z_1 \in \HS$ and $x_1 \in \text{ker}V_2^*$ such that $x=V_2z_1+x_1$. Then
		\begin{align}\label{eqn_101}
			V_1V_2^*x=V_1V_2^*V_2z_1+V_1V_2^*x_1=V_1z_1,
		\end{align}
		where the last equality follows from the facts that $V_2$ is an isometry and $x_1 \in \text{ker}V_2^*$. Moreover, we have that $V_2^*V_1x_1=0$ and so,
		$
		V_2^*V_1x=V_2^*V_1V_2z_1+V_2^*V_1x_1=V_2^*(q_{12}V_2V_1)z_1=q_{12}V_1z_1=q_{12}V_1V_2^*x,
		$
		where the last equality follows from \eqref{eqn_101}. The proof is now complete.
	\end{proof}

	\begin{lem}\label{lem_106}
		Let $\underline{T}=(T_1, \dotsc, T_k)$ be a $q$-commuting tuple of contractions with $\|q\|=1$ acting on a Hilbert space $\HS$. If $\underline{V}=(V_1, \dotsc, V_k)$ is a minimal $q$-isometric dilation of $\underline{T}$, then $\HS$ is a joint invariant subspace for $V_1^*, \dotsc, V_k^*$ and $T_i^*=V_i^*|_{\HS}$ for $1\leq i \leq k$.
	\end{lem}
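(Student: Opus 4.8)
The plan is to adapt the classical argument showing that the base space of a minimal isometric dilation is co-invariant, while keeping track of the unimodular twisting factors produced by $q$-commutativity. Fix $i \in \{1,\dotsc,k\}$ and vectors $h, h' \in \HS$. Since the coordinates of $\underline{V}$ are $q$-commuting with the \emph{same} family $q$ as those of $\underline{T}$, I would first record the elementary relation obtained by pushing $V_i$ from the front of a word $V_1^{m_1}\dotsc V_k^{m_k}$ into its natural slot:
\[
V_i\,V_1^{m_1}\dotsc V_k^{m_k} \;=\; \lambda_{\underline{m}}\; V_1^{m_1}\dotsc V_{i-1}^{m_{i-1}} V_i^{m_i+1} V_{i+1}^{m_{i+1}}\dotsc V_k^{m_k}, \qquad \lambda_{\underline{m}} := \prod_{j<i} q_{ij}^{m_j} \in \T ,
\]
valid for all $m_1,\dotsc,m_k \in \Z_+$, together with the observation that the identical identity with every $V$ replaced by the corresponding $T$ holds on $\HS$, and with the \emph{same} scalar $\lambda_{\underline{m}}$.

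Next I would compute $\langle V_i^* h,\, V_1^{m_1}\dotsc V_k^{m_k} h'\rangle$ for arbitrary $m_1,\dotsc,m_k \in \Z_+$. Moving $V_i$ off the adjoint and applying the displayed relation turns this into $\lambda_{\underline{m}}\langle h,\, V_1^{m_1}\dotsc V_i^{m_i+1}\dotsc V_k^{m_k} h'\rangle$; inserting $P_\HS$ (legitimate since $h \in \HS$ and $P_\HS$ is self-adjoint) and applying the dilation identity $T_1^{n_1}\dotsc T_k^{n_k} = P_\HS V_1^{n_1}\dotsc V_k^{n_k}|_\HS$ with $n_i = m_i+1$ replaces the word in the $V$'s by the corresponding word in the $T$'s; finally, reversing the twisting identity on $\HS$ re-collects the scalar $\lambda_{\underline{m}}$ and rewrites the expression as $\langle h,\, T_i T_1^{m_1}\dotsc T_k^{m_k} h'\rangle = \langle T_i^* h,\, T_1^{m_1}\dotsc T_k^{m_k} h'\rangle$, which by the dilation identity and $T_i^* h \in \HS$ equals $\langle T_i^* h,\, V_1^{m_1}\dotsc V_k^{m_k} h'\rangle$.

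Thus $V_i^* h$ and $T_i^* h$ have the same inner product against every vector of the form $V_1^{m_1}\dotsc V_k^{m_k} h'$ with $m_1,\dotsc,m_k \in \Z_+$ and $h' \in \HS$. By minimality of $\underline{V}$, such vectors span a dense subspace of $\KS$, so $V_i^* h = T_i^* h$. Since $i$ and $h$ were arbitrary, this simultaneously yields $V_i^* \HS \subseteq \HS$ for every $i$ and $T_i^* = V_i^*|_\HS$ for $1 \leq i \leq k$, which is the assertion.

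I expect the only point requiring genuine care to be the scalar bookkeeping: one must check that the factor $\lambda_{\underline{m}}$ acquired when commuting $V_i$ into its slot coincides with the factor acquired when commuting $T_i$ into its slot. This holds precisely because $\underline{V}$ and $\underline{T}$ satisfy the same $q$-commutation relations, so the two occurrences of $\lambda_{\underline{m}}$ cancel and no net twisting survives; with that in hand the computation is the classical co-invariance argument, step for step.
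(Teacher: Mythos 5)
Your argument is correct and is essentially the paper's own proof: both rest on slotting $V_i$ (resp.\ $T_i$) into its position in the word $V_1^{m_1}\dotsc V_k^{m_k}$, noting that the two unimodular factors $\prod_{j<i} q_{ij}^{m_j}$ coincide and cancel, applying the dilation identity, and invoking minimality for density. The only cosmetic difference is that the paper first establishes the intertwining $T_iP_{\HS}=P_{\HS}V_i$ and then passes to adjoints, whereas you compare $\la V_i^*h,\cdot\ra$ and $\la T_i^*h,\cdot\ra$ directly on the spanning set.
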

	
	\begin{proof}
		Let $\underline{V}=(V_1, \dotsc, V_k)$ be a minimal $q$-isometric dilation of $\underline{T}$ acting on the space $\mathcal{K}=\ov{\text{span}}\left\{V_1^{m_1}\dotsc V_k^{m_k}h : m_1, \dotsc, m_k \in \Z_+, h \in \HS \right\}$. Take $h \in \HS$ and $m_1, \dotsc, m_k \in \Z_+$. Using the $q$-commutativity hypothesis, we have that
		\begin{align*}
			T_iP_{\HS}V_1^{m_1}\dotsc V_k^{m_k}h
			=T_i T_1^{m_1}\dotsc T_k^{m_k}h
			&=\underset{1 \leq j<i}{\prod}q_{ij}^{m_j} \ T_1^{m_1}\dotsc T_{i-1}^{m_{i-1}}T_i^{m_i+1}T_{i+1}^{m_{i+1}}\dotsc T_k^{m_k}h\\
			&=\underset{1 \leq j<i}{\prod}q_{ij}^{m_j} P_{\HS} V_1^{m_1}\dotsc V_{i-1}^{m_{i-1}}V_i^{m_i+1}V_{i+1}^{m_{i+1}}\dotsc V_k^{m_k}h\\
			&=\underset{1 \leq j<i}{\prod}q_{ij}^{m_j}\underset{1 \leq j<i}{\prod}q_{ij}^{-m_j} P_{\HS} V_i V_1^{m_1}\dotsc V_k^{m_k}h\\
			&=P_{\HS} V_i V_1^{m_1}\dotsc V_k^{m_k}h.
		\end{align*}
		Consequently, $T_iP_{\HS}=P_{\HS}V_i$ for $1 \leq i \leq k$. Let $h \in \HS$ and $x \in \mathcal{K}$. Then
		\begin{align*}
			\la T_i^*h, x\ra=\la T_i^*h, P_\HS x\ra=\la h, T_iP_\HS x \ra=\la h, P_\HS V_ix  \ra=\la h, V_ix \ra=\la V_i^*h, x\ra     
		\end{align*}
		and so, $T_i^*=V_i^*|_{\HS}$ for $1 \leq i \leq k$. The proof is now complete.
	\end{proof}

	\begin{defn}
		Let $\underline{V}=(V_1, \dotsc, V_k)$ be a $q$-commuting tuple of isometries  with $\|q\|=1$  acting on a Hilbert space $\mathcal{K}$. We say that a $q$-commuting tuple of unitaries $\underline{U}=(U_1, \dotsc, U_k)$ acting on a Hilbert space $\mathcal{K}_0 \supseteq \mathcal{K}$ is a \textit{$q$-unitary extension} of $\underline{V}$ if $\mathcal{K}$ is a joint invariant subspace for $\underline{U}$ and $V_i=U_i|_{\mathcal{K}}$ for $1 \leq i \leq k$. The extension is minimal if 
		$
		\mathcal{K}_0=\ov{\text{span}}\left\{U_1^{*m_1}\dotsc U_k^{*m_k}x : m_1, \dotsc, m_k \in \Z_+, x \in \mathcal{K} \right\}.
		$
	\end{defn}
	
	We have by Theorem \ref{thm_001} that a $q$-commuting tuple of isometries $\underline{V}$ with $\|q\|=1$ admits a minimal $q$-unitary extension. It is not difficult to see that a minimal $q$-unitary extension of $\underline{V}$ is indeed a minimal $q$-unitary dilation of $\underline{V}$ and vice-versa.

	\begin{lem}\label{lem_208}
		Let $\underline{T}=(T_1, \dotsc, T_k)$ be a $q$-commuting tuple of contractions with $\|q\|=1$  acting on a Hilbert space $\HS$. If $\underline{V}$ is a minimal regular (respectively $*$-regular) $q$-isometric  dilation of $\underline{T}$, then the minimal $q$-unitary extension of $\underline{V}$ is a minimal regular (respectively $*$-regular) $q$-unitary dilation of $\underline{T}$.
	\end{lem}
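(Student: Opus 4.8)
The plan is to unwind the definition of the minimal $q$-unitary extension and reduce both regularity statements to a single compression identity. Write $\underline{U}=(U_1,\dotsc,U_k)$ for the minimal $q$-unitary extension of $\underline{V}$, acting on a Hilbert space $\mathcal{K}_0\supseteq\mathcal{K}$, so that $\mathcal{K}$ is a joint invariant subspace for $\underline{U}$, $V_i=U_i|_{\mathcal{K}}$ for each $i$, and $\mathcal{K}_0=\ov{\text{span}}\{U_1^{*n_1}\dotsc U_k^{*n_k}x:\ n_1,\dotsc,n_k\in\Z_+,\ x\in\mathcal{K}\}$. Three things must be verified: that $\underline{U}$ is a $q$-unitary dilation of $\underline{T}$, that this dilation is minimal, and that it is regular (respectively $*$-regular).

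The dilation property is immediate: since $\mathcal{K}$ is invariant under each $U_i$ and $V_i=U_i|_{\mathcal{K}}$, one has $V_1^{m_1}\dotsc V_k^{m_k}=U_1^{m_1}\dotsc U_k^{m_k}|_{\mathcal{K}}$ for all $m_1,\dotsc,m_k\in\Z_+$, whence $T_1^{m_1}\dotsc T_k^{m_k}=P_{\HS}V_1^{m_1}\dotsc V_k^{m_k}|_{\HS}=P_{\HS}U_1^{m_1}\dotsc U_k^{m_k}|_{\HS}$. For minimality, I would show that $\mathcal{M}:=\ov{\text{span}}\{U_1^{m_1}\dotsc U_k^{m_k}h:\ m_1,\dotsc,m_k\in\Z,\ h\in\HS\}$ is a reducing subspace of $\mathcal{K}_0$ for $\underline{U}$: it is plainly invariant under each $U_i$, and because the $q$-commutation relations let $U_i^{*}=U_i^{-1}$ pass through every $U_j^{m_j}$ at the cost of a unimodular scalar, $\mathcal{M}$ is also invariant under each $U_i^{*}$. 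Restricting the spanning set for $\mathcal{K}$ coming from minimality of $\underline{V}$ to exponents $m_i\ge 0$ and using $V_i=U_i|_{\mathcal{K}}$ gives $\mathcal{K}\subseteq\mathcal{M}$, and then the description of $\mathcal{K}_0$ above (together with $U_i^{*}$-invariance of $\mathcal{M}$) forces $\mathcal{K}_0\subseteq\mathcal{M}$; since $\mathcal{M}\subseteq\mathcal{K}_0$ trivially, $\mathcal{M}=\mathcal{K}_0$, which is exactly the required minimality.

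The real content is the regularity. I would first establish the compression identity
\[
V_1^{*m_1^-}\dotsc V_k^{*m_k^-}=P_{\mathcal{K}}\big(U_1^{*m_1^-}\dotsc U_k^{*m_k^-}\big)\big|_{\mathcal{K}}\qquad\text{on }\mathcal{K},
\]
valid for all $m_1,\dotsc,m_k\in\Z$. This holds because $\mathcal{K}^{\perp}$ inside $\mathcal{K}_0$ is invariant under every $U_i^{*}$, so each $U_i^{*}$ is block lower-triangular with respect to $\mathcal{K}\oplus\mathcal{K}^{\perp}$ with $(1,1)$-entry $P_{\mathcal{K}}U_i^{*}|_{\mathcal{K}}=V_i^{*}$; an ordered product of block lower-triangular operators is again block lower-triangular with $(1,1)$-entry the ordered product of the $(1,1)$-entries. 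Next, for $h\in\HS\subseteq\mathcal{K}$ the positive word $V_1^{m_1^+}\dotsc V_k^{m_k^+}h$ coincides with $U_1^{m_1^+}\dotsc U_k^{m_k^+}h$ and still lies in $\mathcal{K}$; applying the compression identity to this vector and then using $P_{\HS}P_{\mathcal{K}}=P_{\HS}$ yields
\[
P_{\HS}\big[V_1^{*m_1^-}\dotsc V_k^{*m_k^-}\big]\big[V_1^{m_1^+}\dotsc V_k^{m_k^+}\big]\big|_{\HS}=P_{\HS}\big[U_1^{*m_1^-}\dotsc U_k^{*m_k^-}\big]\big[U_1^{m_1^+}\dotsc U_k^{m_k^+}\big]\big|_{\HS}.
\]
Combining this with the regular $q$-isometric dilation condition for $\underline{V}$ gives the regular $q$-unitary dilation condition for $\underline{U}$; in the $*$-regular case the same substitution is carried out verbatim on the right-hand side of the identity in Definition \ref{defn_*-reg}, and the fixed scalar $\prod_{1\le i<j\le k}q_{ij}^{m_i^-m_j^+-m_i^+m_j^-}$ is unaffected by replacing $\underline{V}$ with $\underline{U}$. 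I expect the compression step to be the main obstacle: one must be careful to feed the adjoint word only vectors already lying in $\mathcal{K}$, and to note that the passage ``product of compressions $=$ compression of product'' genuinely uses the one-sided invariance $U_i\mathcal{K}\subseteq\mathcal{K}$ built into the notion of $q$-unitary extension and would fail for an arbitrary subspace $\mathcal{K}\subseteq\mathcal{K}_0$.
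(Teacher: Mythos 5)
Your proposal is correct and follows essentially the same route as the paper: the key step in both is the compression identity $P_{\mathcal{K}}U_1^{*m_1^-}\dotsc U_k^{*m_k^-}|_{\mathcal{K}}=V_1^{*m_1^-}\dotsc V_k^{*m_k^-}$ (the paper obtains it by taking adjoints of the restricted reversed word, you by block lower-triangularity, which is the same observation), after which the positive word applied to $\HS$ stays in $\mathcal{K}$ where $U$ and $V$ agree. Your treatment is somewhat more explicit about minimality and about why compression of a product equals the product of compressions, but there is no substantive difference in method.
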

	
	\begin{proof}
		Let $\underline{V}=(V_1, \dotsc, V_k)$ be a minimal regular $q$-isometric dilation of $\underline{T}$ acting on the space  
		\[
		\mathcal{K}_+=\ov{\text{span}}\left\{V_1^{m_1}\dotsc V_k^{m_k}h : m_1, \dotsc, m_k \in \Z_+, \ h \in \HS \right\}.
		\]
		Suppose $\underline{U}=(U_1, \dotsc, U_k)$ is a minimal $q$-unitary extension of $\underline{V}$ acting on the space $\mathcal{K}_0$ given by
		\[
		\mathcal{K}_0=\ov{\text{span}}\left\{U_1^{*m_1}\dotsc U_k^{*m_k}x \ : \ m_1, \dotsc, m_k \in \Z_+, \ x \in \mathcal{K}_+ \right\}.
		\]
		For minimality, note that $\mathcal{K}_0=\ov{\text{span}}\left\{U_1^{m_1}\dotsc U_k^{m_k}h : m_1, \dotsc, m_k \in \Z, \ h \in \HS \right\}$. We have 
		\begin{align}\label{eqn_103}
			U_k^{m_k^-}\dotsc U_1^{m_1^-}|_{\mathcal{K}_+}=V_k^{m_k^-}\dotsc V_1^{m_1^-} \quad \text{and so,} \quad P_{\mathcal{K}_+}U_1^{*m_1^-}\dotsc U_k^{*m_k^-}|_{\mathcal{K}_+}=V_1^{*m_1^-}\dotsc V_k^{*m_k^-},
		\end{align}
		for all $m_1, \dotsc, m_k \in \Z$. Here, $P_{\mathcal{K}_+}$ is the orthogonal projection of $\mathcal{K}_0$ onto $\mathcal{K}_+$. Since $\underline{V}$ is a minimal regular $q$-isometric dilation of $\underline{T}$, we have that
		\begin{align*}
			\left[T_1^{*m_1^-}\dotsc T_k^{*m_k^-}\right]\left[T_1^{m_1^+} \dotsc T_k^{m_k^+}\right]
			&=P_\mathcal{H}\left[V_1^{*m_1^-}\dotsc V_k^{*m_k^-}\right]\left[V_1^{m_1^+} \dotsc V_k^{m_k^+}\right]\bigg|_\mathcal{H}\\
			&=P_\mathcal{H}\left[U_1^{*m_1^-}\dotsc U_k^{*m_k^-}\right]\left[U_1^{m_1^+} \dotsc U_k^{m_k^+}\right]\bigg|_\mathcal{H},
		\end{align*}
		where the last equality follows from \eqref{eqn_103} and the fact that $V_i=U_i|_{\mathcal{K}_+}$ for $1 \leq i \leq k$. So, $\underline{U}$ is a minimal regular $q$-unitary dilation of $\underline{T}$. Similarly, one can prove the rest of the conclusion.
	\end{proof}
	
	Following the similar method as above, one can easily arrive at the following result. 
	
	\begin{cor}\label{cor_112}
		Let $\underline{T}=(T_1, \dotsc, T_k)$ be a $q$-commuting tuple of contractions with $\|q\|=1$  acting on a Hilbert space $\HS$. If $\underline{U}=(U_1, \dotsc, U_k)$ is a minimal regular $q$-unitary dilation of $\underline{T}$, then the restriction of $\underline{U}$ to the space 
		\[
		\mathcal{K}_+=\ov{\text{span}}\left\{U_1^{m_1}\dotsc U_k^{m_k}h : m_1, \dotsc, m_k \in \Z_+, h \in \HS \right\}
		\] 
		is a minimal regular $q$-isometric dilation of $\underline{T}$. Also, the restriction of $(U_1^*, \dotsc, U_k^*)$ to the space 
		\[
		\mathcal{K}_-=\ov{\text{span}}\left\{U_1^{*m_1}\dotsc U_k^{*m_k}h : m_1, \dotsc, m_k \in \Z_+, h \in \HS \right\}
		\] 
		is a minimal $*$-regular $q$-isometric dilation of $(T_1^*, \dotsc, T_k^*)$.
	\end{cor}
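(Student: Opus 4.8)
The plan is to mirror the proof of Lemma \ref{lem_208}, but in the reverse direction: instead of extending a regular $q$-isometric dilation to a $q$-unitary one, I would restrict the given minimal regular $q$-unitary dilation $\underline{U}=(U_1,\dotsc,U_k)$ to the forward subspace $\mathcal{K}_+$. First I would check that $\mathcal{K}_+$ is a joint invariant subspace for $U_1,\dotsc,U_k$: applying $U_i$ to a spanning vector $U_1^{m_1}\dotsc U_k^{m_k}h$ and sliding $U_i$ into its slot via the $q$-commutation relations only multiplies by the unimodular scalar $\prod_{j<i}q_{ij}^{m_j}$ (exactly the computation carried out in the proof of Lemma \ref{lem_106}), so the result again lies in $\mathcal{K}_+$. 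Consequently $V_i:=U_i|_{\mathcal{K}_+}$ is an isometry, $\underline{V}=(V_1,\dotsc,V_k)$ is $q$-commuting with respect to the same family $q$, and since $\HS\subseteq\mathcal{K}_+$ and $V_1^{m_1}\dotsc V_k^{m_k}h=U_1^{m_1}\dotsc U_k^{m_k}h$ for all $m_1,\dotsc,m_k\in\Z_+$ and $h\in\HS$, the dilation identity $T_1^{m_1}\dotsc T_k^{m_k}=P_\HS V_1^{m_1}\dotsc V_k^{m_k}|_\HS$ is inherited from that of $\underline{U}$. Minimality is immediate, because $V_i$ and $U_i$ agree on $\mathcal{K}_+$, so $\mathcal{K}_+$ is precisely the closed span of $\{V_1^{m_1}\dotsc V_k^{m_k}h:m_i\in\Z_+,\ h\in\HS\}$.

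For the regularity of $\underline{V}$, the key bookkeeping is that, $\mathcal{K}_+$ being $U_i$-invariant, $V_i^*=P_{\mathcal{K}_+}U_i^*|_{\mathcal{K}_+}$, and hence $V_1^{*m_1^-}\dotsc V_k^{*m_k^-}=P_{\mathcal{K}_+}U_1^{*m_1^-}\dotsc U_k^{*m_k^-}|_{\mathcal{K}_+}$ for all $m_1,\dotsc,m_k\in\Z$, exactly as in \eqref{eqn_103}. Combining this with $V_1^{m_1^+}\dotsc V_k^{m_k^+}|_\HS=U_1^{m_1^+}\dotsc U_k^{m_k^+}|_\HS$ and the identity $P_\HS P_{\mathcal{K}_+}=P_\HS$ (valid since $\HS\subseteq\mathcal{K}_+$), the right-hand side of the regular $q$-isometric dilation condition collapses to $P_\HS U_1^{*m_1^-}\dotsc U_k^{*m_k^-}U_1^{m_1^+}\dotsc U_k^{m_k^+}|_\HS$, which equals $[T_1^{*m_1^-}\dotsc T_k^{*m_k^-}][T_1^{m_1^+}\dotsc T_k^{m_k^+}]$ by the regular $q$-unitary dilation hypothesis \eqref{eqn_reg} on $\underline{U}$. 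This proves that the restriction of $\underline{U}$ to $\mathcal{K}_+$ is a minimal regular $q$-isometric dilation of $\underline{T}$.

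For the second assertion I would first invoke Lemma \ref{lem_109}: since $\underline{U}$ is a minimal regular $q$-unitary dilation of $\underline{T}$, the tuple $(U_1^*,\dotsc,U_k^*)$ — which is again $q$-commuting with respect to the same family $q$ — is a minimal $*$-regular $q$-unitary dilation of $(T_1^*,\dotsc,T_k^*)$. Now $\mathcal{K}_-$ is exactly the forward subspace $\ov{\text{span}}\{(U_1^*)^{m_1}\dotsc(U_k^*)^{m_k}h:m_i\in\Z_+,\ h\in\HS\}$ associated to this tuple, so running the argument of the first two paragraphs verbatim — with $U_i$ replaced by $U_i^*$, $T_i$ by $T_i^*$, and the regular condition \eqref{eqn_reg} replaced by the $*$-regular condition of Definition \ref{defn_*-reg} (equivalently, the generalization of \eqref{eqn_*reg2} recorded just before that definition) — shows that the restriction of $(U_1^*,\dotsc,U_k^*)$ to $\mathcal{K}_-$ is a minimal $*$-regular $q$-isometric dilation of $(T_1^*,\dotsc,T_k^*)$. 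The main point requiring care is the correct tracking of the unimodular factors $\prod_{i<j}q_{ij}^{m_i^-m_j^+-m_i^+m_j^-}$ that appear in the $*$-regular condition, together with confirming that $(U_1^*,\dotsc,U_k^*)$ is $q$-commuting with respect to $q$ itself and that these scalar factors pass unchanged through the restriction-and-compression step; everything else is a routine transcription of the computations in Lemma \ref{lem_208}.
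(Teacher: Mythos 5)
Your proposal is correct and follows essentially the same route as the paper, which gives no separate argument for Corollary \ref{cor_112} beyond the remark that it follows ``by the similar method'' as Lemma \ref{lem_208}; your write-up is a faithful elaboration of that method (invariance of $\mathcal{K}_+$, the compression identity as in \eqref{eqn_103}/\eqref{eqn_118}, and Lemma \ref{lem_109} for the dual statement). The one point worth stating explicitly, which you assert correctly, is that $(U_1^*,\dotsc,U_k^*)$ is $q$-commuting with respect to the \emph{same} family $q$, since taking adjoints in $U_iU_j=q_{ij}U_jU_i$ yields $U_i^*U_j^*=q_{ij}U_j^*U_i^*$.
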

	
From here onwards, for a tuple $\underline{T}=(T_1, \dotsc, T_k)$ and $m=(m_1, \dotsc, m_k) \in \Z_+^k$, we denote by 
	\[
	\underline{T}^m=T_1^{m_1} \dotsc T_k^{m_k} \quad \text{and} \quad \underline{T}^{*m}=(T_1^{m_1} \dotsc T_k^{m_k})^*.
	\]
	For $1 \leq \alpha \leq k$, we denote by $e_\alpha$ the vector in $\Z_+^k$ with its $\alpha$-th entry $1$ and all other entries are zero. For $m=(m_1, \dotsc, m_k), n=(n_1, \dotsc, n_k) \in \Z^k$, we say that $m \geq n$ if $m_i \geq n_i$ for  $1 \leq i \leq k$. The following lemma is technical in nature but it plays a crucial role in establishing our main results.
	
	\begin{lem}\label{lem_112}
		Assume that $\underline{T}=(T_1, \dotsc, T_k)$ is a $q$-commuting tuple of contractions with $\|q\|=1$  acting on a Hilbert space $\HS$ and $\underline{V}=(V_1, \dotsc, V_k)$ is a minimal $*$-regular $q$-isometric dilation of $\underline{T}$. Let $1\leq \alpha \leq k$. For $m=(m_1, \dotsc, m_k), \mu=(\mu_1, \dotsc, \mu_k) \in \Z_+^k$ with $m_\alpha=0$ and $\mu_\alpha \geq 1$, we have 
		\smallskip
		\begin{align*}
			\la \underline{V}^mx, \underline{V}^\mu z\ra =\la \underline{V}^mV_\alpha T_\alpha^*x, \ \underline{V}^\mu z \ra 
		\end{align*}
		for every $x, z \in \HS$.
	\end{lem}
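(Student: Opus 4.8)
The plan is to reduce the asserted identity to a single relation among the scalars $q_{ij}$, via a canonical form for inner products of the shape $\la\underline{V}^{a}u,\underline{V}^{b}v\ra$ with $a,b\in\Z_+^k$ and $u,v\in\HS$, into which the $*$-regularity of $\underline{V}$ enters exactly once. First I would record the elementary bookkeeping. From $V_iV_j=q_{ij}V_jV_i$ one gets $V_i^*V_j^*=q_{ij}V_j^*V_i^*$, so any product of the $V_i$'s, and any product of the $V_i^*$'s, may be rearranged into increasing order of index at the cost of a unimodular scalar; moreover for $p\in\Z_+^k$ the operator $V_1^{p_1}\dotsc V_k^{p_k}$ is an isometry. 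Since $m_\alpha=0$, this produces a unimodular constant $\tau=\prod_{j>\alpha}q_{j\alpha}^{m_j}$ with $\underline{V}^{m}V_\alpha=\tau\,\underline{V}^{m+e_\alpha}$, whence $\underline{V}^{m}V_\alpha T_\alpha^*x=\tau\,\underline{V}^{m+e_\alpha}(T_\alpha^*x)$, and the lemma is equivalent to
\[
\la\underline{V}^{m}x,\underline{V}^{\mu}z\ra=\tau\,\la\underline{V}^{m+e_\alpha}(T_\alpha^*x),\underline{V}^{\mu}z\ra ,
\]
an identity between two inner products of the stated shape.

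Next I would establish the canonical form: for all $a,b\in\Z_+^k$ and $u,v\in\HS$, writing $n=a-b\in\Z^k$,
\[
\la\underline{V}^{a}u,\underline{V}^{b}v\ra=C(a,b)\,\la\big(T_1^{n_1^+}\dotsc T_k^{n_k^+}\big)\big(T_1^{*n_1^-}\dotsc T_k^{*n_k^-}\big)u,\,v\ra ,
\]
with $C(a,b)$ a unimodular scalar determined by $a,b$ and the $q_{ij}$'s. To prove it, put $c=a\wedge b$ (coordinatewise minimum), so that $a-c,b-c\in\Z_+^k$ with $(a-c)_i=n_i^+$ and $(b-c)_i=n_i^-$. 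Rearranging factors gives unimodular $s_1,s_2$ with $\underline{V}^{a}=s_1\underline{V}^{c}\underline{V}^{a-c}$ and $\underline{V}^{b}=s_2\underline{V}^{c}\underline{V}^{b-c}$, and since $\underline{V}^{c}$ is an isometry,
\[
\la\underline{V}^{a}u,\underline{V}^{b}v\ra=s_1\overline{s_2}\,\la\underline{V}^{a-c}u,\underline{V}^{b-c}v\ra=s_1\overline{s_2}\,\la(\underline{V}^{b-c})^*\underline{V}^{a-c}u,\,v\ra .
\]
Rearranging $(\underline{V}^{b-c})^*=V_k^{*n_k^-}\dotsc V_1^{*n_1^-}$ into increasing order of index contributes one more unimodular factor $s_3$; then, as $u,v\in\HS$, the $*$-regularity of $\underline{V}$ (Definition \ref{defn_*-reg}), applied with this $n$, lets us replace $P_\HS\big[V_1^{*n_1^-}\dotsc V_k^{*n_k^-}\big]\big[V_1^{n_1^+}\dotsc V_k^{n_k^+}\big]\big|_\HS$ by $\big(\prod_{i<j}q_{ij}^{n_i^-n_j^+-n_i^+n_j^-}\big)^{-1}\big(T_1^{n_1^+}\dotsc T_k^{n_k^+}\big)\big(T_1^{*n_1^-}\dotsc T_k^{*n_k^-}\big)$. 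This gives the claim with $C(a,b)=s_1\overline{s_2}s_3\big(\prod_{i<j}q_{ij}^{n_i^-n_j^+-n_i^+n_j^-}\big)^{-1}$.

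Finally I would apply the canonical form to each side of the reduced identity, with $\mu$ fixed. For $a=m$ one has $n_\alpha=-\mu_\alpha\le-1$, so the resulting operator is $\big(T_1^{n_1^+}\dotsc T_k^{n_k^+}\big)$ — carrying the identity in slot $\alpha$ — followed by $\big(T_1^{*n_1^-}\dotsc T_k^{*n_k^-}\big)$, which carries the factor $T_\alpha^{*\mu_\alpha}$ in slot $\alpha$; for $a=m+e_\alpha$ the exponent in slot $\alpha$ becomes $1-\mu_\alpha\le0$, so the first product is unchanged and the second carries $T_\alpha^{*(\mu_\alpha-1)}$ in slot $\alpha$, all other slots being identical. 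Writing $T_\alpha^{*\mu_\alpha}=T_\alpha^{*(\mu_\alpha-1)}T_\alpha^*$ and moving that single $T_\alpha^*$ to the right past the remaining adjoints $T_j^*$ with $j>\alpha$ — via $T_i^*T_j^*=q_{ij}T_j^*T_i^*$, which costs $\prod_{j>\alpha}q_{\alpha j}^{n_j^-}$ — shows that the operator obtained in the first case, applied to $x$, equals $\prod_{j>\alpha}q_{\alpha j}^{n_j^-}$ times the operator obtained in the second case, applied to $T_\alpha^*x$. Hence the reduced identity, and with it the lemma, holds as soon as
\[
C(m,\mu)\,\prod_{j>\alpha}q_{\alpha j}^{n_j^-}=\tau\,C(m+e_\alpha,\mu),\qquad n=m-\mu .
\]
The hard part will be verifying this last scalar equation: one has to unwind the rearrangement constants $s_1,s_2,s_3$ and the $q$-product inside $C$ and track how each one changes when $m$ is replaced by $m+e_\alpha$ — noting that $c=a\wedge b$ then shifts by $e_\alpha$ while $a-c$ does not, since $0=m_\alpha<\mu_\alpha$. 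This is the purely computational core, the ``series of long computations'' foreshadowed in the introduction; everything leading up to it is structural.
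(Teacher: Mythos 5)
Your overall strategy coincides with the paper's: commute $V_\alpha$ into the monomial via $\underline{V}^m V_\alpha=\bigl(\prod_{j>\alpha}q_{j\alpha}^{m_j}\bigr)\underline{V}^{m+e_\alpha}$, reduce each of the two inner products to a unimodular scalar times $\la \bigl[T_1^{n_1^+}\dotsc T_k^{n_k^+}\bigr]\bigl[T_1^{*n_1^-}\dotsc T_k^{*n_k^-}\bigr]u, z\ra$ with $n=a-\mu$, using $q$-rearrangement, the isometry property and a single application of $*$-regularity, and then peel one $T_\alpha^*$ off $T_\alpha^{*\mu_\alpha}$ to match the two $T$-expressions. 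Your derivation of the canonical form by factoring out the common isometric prefix $\underline{V}^{a\wedge b}$ is a slightly cleaner packaging of the paper's identity $V_j^{*\mu_j}V_j^{m_j}=V_j^{*(m_j-\mu_j)^-}V_j^{(m_j-\mu_j)^+}$ together with the rearrangements leading to \eqref{eqn_107}, and every intermediate identity you state is correct, including the observation that only the $\alpha$-th slot changes when $m$ is replaced by $m+e_\alpha$ because $m_\alpha=0<\mu_\alpha$.

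The gap is that you stop exactly where the proof actually lives. You reduce the lemma to the scalar identity $C(m,\mu)\prod_{j>\alpha}q_{\alpha j}^{n_j^-}=\tau\,C(m+e_\alpha,\mu)$ and then defer its verification as ``the hard part.'' This cannot be waved through: after your reduction, both sides of the asserted equality are unimodular multiples of the \emph{same} inner product, and since that inner product may or may not vanish depending on $\underline{T}$, $x$, $z$, the equality can only follow from the exact cancellation of the $q$-factors --- it is not implied by anything structural you have set up. That cancellation is the entire computational content of the paper's proof: the explicit bookkeeping of the exponents $\alpha_{ij}$, $\beta_{ij}$, $\gamma_{ij}$, $t_{ij}$, $s_{ij}$, the difference formulas \eqref{eqn_115}, and the verification of \eqref{eqn_113} via \eqref{eqn_114}, where the product telescopes to $\prod_{j>\alpha}q_{\alpha j}^{(m_j-\mu_j)^+-(m_j-\mu_j)^--(m_j-\mu_j)}=1$. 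Until you unwind your rearrangement constants $s_1,\overline{s_2},s_3$ and the $*$-regularity factor and check that the analogous cancellation occurs, the lemma is set up but not proved.
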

	
	\begin{proof}
		Assume that $T_iT_j=q_{ij}T_jT_i$ and $V_iV_j=q_{ij}V_jV_i$, where $q_{ij} \in \T$ for $1 \leq i, j \leq k$ with $i \ne j$. Choose any $\alpha \in \{1, \dotsc, k\}$ and $x, z \in \HS$. Let $m=(m_1, \dotsc, m_k), \mu=(\mu_1, \dotsc, \mu_k) \in \Z_+^k$ be such that $m_\alpha=0$ and $\mu_\alpha \geq 1$. We show that 
		\begin{align}\label{eqn_104}
			\underline{V}^{*\mu}\underline{V}^{m}=\underset{1 \leq i<j \leq k}{\prod}q_{ij}^{\alpha_{ij}(m, \mu)}\underline{V}^{*(m-\mu)^-}\underline{V}^{(m-\mu)^+},
		\end{align}
		where $\alpha_{ij}(m, \mu)=(m_i-\mu_i)(m_j-(m_j-\mu_j)^+)$ for $1 \leq i, j \leq k$ with $i<j$. 
		Since  $\underline{V}$ consists of isometries, we have 
		\begin{align}\label{eqn_105}
			V_j^{*\mu_j}V_j^{m_j}=V_j^{*(m_j-\mu_j)^-}V_j^{(m_j-\mu_j)^+}
		\end{align}
		for $1 \leq j \leq k$. It follows from the $q$-commutativity hypothesis  that 
		\begin{align}\label{eqn_106}
			V_i^{*n_i}V_j^{*n_j}=q_{ij}^{n_in_j}V_j^{*n_j}V_i^{*n_i} \qquad (n_i, n_j \in \Z_+)
		\end{align} 
		for $1 \leq i, j \leq k$ with $i \ne j$. Then 
		\begin{align*}
			V_2^{*\mu_2}V_1^{*\mu_1}V_1^{m_1}V_2^{m_2}
			&=V_2^{*\mu_2}V_1^{*(m_1-\mu_1)^-}V_1^{(m_1-\mu_1)^+}V_2^{m_2} \quad [\text{by \eqref{eqn_105}}]\\
			&=q_{12}^{m_2(m_1-\mu_1)^+-\mu_2(m_1-\mu_1)^-}V_1^{*(m_1-\mu_1)^-}V_2^{*\mu_2}V_2^{m_2}V_1^{(m_1-\mu_1)^+}\quad [\text{by \eqref{eqn_106}}]\\
			&=q_{12}^{m_2(m_1-\mu_1)^+-\mu_2(m_1-\mu_1)^-}V_1^{*(m_1-\mu_1)^-}V_2^{*(m_2-\mu_2)^-}V_2^{(m_2-\mu_2)^+}V_1^{(m_1-\mu_1)^+}\quad [\text{by \eqref{eqn_105}}]\\
			&=q_{12}^{m_2(m_1-\mu_1)^+-\mu_2(m_1-\mu_1)^-+(m_1-\mu_1)^-(m_2-\mu_2)^--(m_2-\mu_2)^+(m_1-\mu_1)^+} \\ 
			& \quad \
			V_2^{*(m_2-\mu_2)^-}V_1^{*(m_1-\mu_1)^-}V_1^{(m_1-\mu_1)^+}V_2^{(m_2-\mu_2)^+}\quad [\text{by \eqref{eqn_106}}].
		\end{align*}
		The exponent of $q_{12}$ in the above equation is given by
		\begin{align*}
			&\quad \ m_2(m_1-\mu_1)^+-\mu_2(m_1-\mu_1)^-+(m_1-\mu_1)^-(m_2-\mu_2)^--(m_2-\mu_2)^+(m_1-\mu_1)^+\\
			&=(m_1-\mu_1)^+(m_2-(m_2-\mu_2)^+)-(m_1-\mu_1)^-(\mu_2-(m_2-\mu_2)^-)\\
			&=(m_2-(m_2-\mu_2)^+)((m_1-\mu_1)^+-(m_1-\mu_1)^-) \quad [\text{as} \ m_2-(m_2-\mu_2)^+=\mu_2-(m_2-\mu_2)^- ]\\
			&=(m_2-(m_2-\mu_2)^+)(m_1-\mu_1)\\
			&=\alpha_{12}(m, \mu),
		\end{align*}
		where $\alpha_{12}(m, \mu)$ is as in \eqref{eqn_104}. Putting everything together, we have that 
		\[	V_2^{*\mu_2}V_1^{*\mu_1}V_1^{m_1}V_2^{m_2}=q_{12}^{\alpha_{12}(m, \mu)}V_2^{*(m_2-\mu_2)^-}V_1^{*(m_1-\mu_1)^-}V_1^{(m_1-\mu_1)^+}V_2^{(m_2-\mu_2)^+},
		\]
		which gives \eqref{eqn_104} when $k=2$. The general case follows using induction establishing \eqref{eqn_104}. A repeated application of $q$-commutativity condition as in \eqref{eqn_106} gives that
		
		\[
		\underline{V}^{*(m-\mu)^-}=V_k^{*(m_k-\mu_k)^-}\dotsc V_1^{*(m_1-\mu_1)^-}=\underset{1 \leq i<j \leq k}{\prod}q_{ij}^{-\beta_{ij}(m, \mu)}V_1^{*(m_k-\mu_k)^-}\dotsc V_k^{*(m_k-\mu_k)^-},
		\]
		where $\beta_{ij}(m, \mu)=(m_i-\mu_i)^-(m_j-\mu_j)^-$ for $1 \leq i, j \leq k$ with $i<j$. Combining above equation with \eqref{eqn_104}, we have
		\begin{align}\label{eqn_107}
			\underline{V}^{*\mu}\underline{V}^{m}=\underset{1 \leq i<j \leq k}{\prod}q_{ij}^{\gamma_{ij}(m, \mu)} \ \left[V_1^{*(m_1-\mu_1)^-}\dotsc V_k^{*(m_k-\mu_k)^-}\right]\left[V_1^{(m_1-\mu_1)^+} \dotsc V_k^{(m_k-\mu_k)^+}\right],
		\end{align}
		where $\gamma_{ij}(m, \mu)=\alpha_{ij}(m, \mu)-\beta_{ij}(m, \mu)$. Since $\underline{V}$ is a $*$-regular dilation of $\underline{T}$, we have that 
		\begin{align}\label{eqn_108}
			&\quad \ \la \underline{V}^mx, \underline{V}^{\mu}z\ra \notag \\
			&= \la \underline{V}^{*\mu}\underline{V}^mx, z\ra \notag \\
			&= \underset{1 \leq i<j \leq k}{\prod}q_{ij}^{\gamma_{ij}(m, \mu)} \la \left[V_1^{*(m_1-\mu_1)^-}\dotsc V_k^{*(m_k-\mu_k)^-}\right]\left[V_1^{(m_1-\mu_1)^+} \dotsc V_k^{(m_k-\mu_k)^+}\right]x, z\ra \quad [\text{by \eqref{eqn_107}}] \notag \\
			&= \underset{1 \leq i<j \leq k}{\prod}q_{ij}^{\gamma_{ij}(m, \mu)} \underset{1 \leq i<j \leq k}{\prod}q_{ij}^{-t_{ij}(m, \mu)} \la \left[T_1^{(m_1-\mu_1)^+} \dotsc T_k^{(m_k-\mu_k)^+}\right]\left[T_1^{*(m_k-\mu_k)^-}\dotsc T_k^{*(m_k-\mu_k)^-}\right]x, z\ra,
		\end{align}
		where 
		\[
		t_{ij}(m, \mu)=(m_i-\mu_i)^-(m_j-\mu_j)^+-(m_i-\mu_i)^+(m_j-\mu_j)^-
		\] 
		for $1 \leq i, j \leq k$ with $i<j$. We define the scalars $s_{ij}(m, \mu)$ for $1 \leq i, j \leq k$ with $i<j$ as follows:
		\[
		s_{ij}(m, \mu)= \left\{
		\begin{array}{ll}
			t_{ij}(m, \mu) & i \ne \alpha, j \ne \alpha, \\
			(m_i-\mu_i)^-(m_\alpha+1-\mu_\alpha)^+-(m_i-\mu_i)^+(m_\alpha+1-\mu_\alpha)^- & i<\alpha, j=\alpha,\\
			(m_\alpha+1-\mu_\alpha)^-(m_j-\mu_j)^+-(m_\alpha+1-\mu_\alpha)^+(m_j-\mu_j)^- & i=\alpha, \alpha<j.\\
		\end{array} 
		\right. 
		\]
		Since $\underline{V}$ is $*$-regular, we have
		\begin{align}\label{eqn_109}
			& \la \left[V_1^{*(m_1-\mu_1)^-}\dotsc V_\alpha^{*(m_\alpha+1-\mu_\alpha)^-} \dotsc V_k^{*(m_k-\mu_k)^-}\right]\left[V_1^{(m_1-\mu_1)^+} \dotsc V_\alpha^{(m_\alpha+1-\mu_\alpha)^+} \dotsc  V_k^{(m_k-\mu_k)^+}\right]
			T_\alpha^*x, z  \ra \notag \\
			&= \underset{1 \leq i<j \leq k}{\prod}q_{ij}^{-s_{ij}(m, \mu)} \notag \\
			& \ \la \left[T_1^{(m_1-\mu_1)^+} \dotsc T_\alpha^{(m_\alpha+1-\mu_\alpha)^+} \dotsc  T_k^{(m_k-\mu_k)^+}\right] \left[T_1^{*(m_1-\mu_1)^-}\dotsc T_\alpha^{*(m_\alpha+1-\mu_\alpha)^-} \dotsc T_k^{*(m_k-\mu_k)^-}\right] 
			T_\alpha^*x, z  \ra.
		\end{align}
		From the conditions $m_\alpha=0$ and $\mu_\alpha \geq 1$, it follows that
		\begin{align}\label{eqn_110}
			(m_\alpha+1-\mu_\alpha)^+=(m_\alpha-\mu_\alpha)^+ \qquad \text{and} \qquad 	(m_\alpha+1-\mu_\alpha)^-+1=(m_\alpha-\mu_\alpha)^-.
		\end{align}
		Some routine computations give that
		\begin{align*}
			\alpha_{ij}(m, \mu)-\alpha_{ij}(m+e_\alpha, \mu)
			= \left\{
			\begin{array}{ll}
				0 & i \ne \alpha, j \ne \alpha, \\
				\mu_i-m_i & i<\alpha, j=\alpha,\\
			(	m_j-\mu_j)^+-m_j & i=\alpha, \alpha<j\\
			\end{array} 
			\right. 
		\end{align*}
	and 	
		\begin{align*}
			\beta_{ij}(m, \mu)-\beta_{ij}(m+e_\alpha, \mu)
			= \left\{
			\begin{array}{ll}
				0 & i \ne \alpha, j \ne \alpha, \\
				(m_i-\mu_i)^- & i<\alpha, j=\alpha,\\
				(m_j-\mu_j)^- & i=\alpha, \alpha<j.\\
			\end{array} 
			\right. 
		\end{align*}
			Using \eqref{eqn_110} along with above computations, it follows that
		\begin{align}\label{eqn_115}
			\gamma_{ij}(m, \mu)-\gamma_{ij}(m+e_\alpha, \mu)
			&=(\alpha_{ij}(m, \mu)-\alpha_{ij}(m+e_\alpha, \mu))-(\beta_{ij}(m, \mu)-\beta_{ij}(m+e_\alpha, \mu)) \notag \\
			&= \left\{
			\begin{array}{ll}
				0 & i \ne \alpha, j \ne \alpha, \\
				-(m_i-\mu_i)^+ & i<\alpha, j=\alpha,\\
				-\mu_j & i=\alpha, \alpha<j.\\
			\end{array} 
			\right. 
		\end{align}
	A few steps of calculations give that
		\begin{align}\label{eqn_111}
			&  \quad \la \underline{V}^mV_\alpha T_\alpha^*x, \ \underline{V}^\mu z \ra \notag \\
				 &=\la V_k^{*\mu_k} \dotsc V_1^{*\mu_1}V_1^{m_1}\dotsc V_k^{m_k}V_\alpha T_\alpha^*x, z  \ra \notag  \\
			&=\underset{j:\alpha<j}{\prod}q_{j\alpha}^{m_j}\la V_k^{*\mu_k} \dotsc V_1^{*\mu_1}V_1^{m_1}\dotsc V_\alpha^{m_\alpha+1} \dotsc V_k^{m_k}T_\alpha^*x, z  \ra \notag \\
			&=\underset{j:\alpha<j}{\prod}q_{\alpha j}^{-m_j}\la \underline{V}^{*\mu} \underline{V}^{m+e_\alpha} T_\alpha^*x, z  \ra \notag  \\
			&=\underset{j:\alpha<j}{\prod}q_{\alpha j}^{-m_j}\underset{1 \leq i<j \leq k}{\prod}q_{ij}^{\gamma_{ij}(m+e_\alpha, \mu)} \qquad \qquad \qquad \qquad \qquad \quad \quad \qquad \qquad \quad \quad \quad \quad \quad \quad  [\text{by \eqref{eqn_107}}]  \notag \\
			& \ \la \left[V_1^{*(m_1-\mu_1)^-}\dotsc V_\alpha^{*(m_\alpha+1-\mu_\alpha)^-} \dotsc V_k^{*(m_k-\mu_k)^-}\right]\left[V_1^{(m_1-\mu_1)^+} \dotsc V_\alpha^{(m_\alpha+1-\mu_\alpha)^+} \dotsc  V_k^{(m_k-\mu_k)^+}\right]
			T_\alpha^*x, z  \ra \notag \\
			&= \underset{j:\alpha<j}{\prod}q_{\alpha j}^{-m_j}\underset{1 \leq i<j \leq k}{\prod}q_{ij}^{\gamma_{ij}(m+e_\alpha, \mu)} \underset{1 \leq i<j \leq k}{\prod}q_{ij}^{-s_{ij}(m, \mu)}  \qquad \qquad \qquad \qquad \qquad \quad \quad \quad \quad \quad [\text{by \eqref{eqn_109}}] \notag  \\
			& \ \la \left[T_1^{(m_1-\mu_1)^+} \dotsc T_\alpha^{(m_\alpha+1-\mu_\alpha)^+} \dotsc  T_k^{(m_k-\mu_k)^+}\right] \left[T_1^{*(m_1-\mu_1)^-}\dotsc T_\alpha^{*(m_\alpha+1-\mu_\alpha)^-} \dotsc T_k^{*(m_k-\mu_k)^-}\right]
			T_\alpha^*x, z  \ra \notag  \\
			&= \underset{j:\alpha<j}{\prod}q_{\alpha j}^{-m_j}\underset{1 \leq i<j \leq k}{\prod}q_{ij}^{\gamma_{ij}(m+e_\alpha, \mu)} \underset{1 \leq i<j \leq k}{\prod}q_{ij}^{-s_{ij}(m, \mu)}  \underset{j:\alpha<j}{\prod}q_{\alpha j}^{-(m_j-\mu_j)^-} \quad \quad \qquad [\text{as $\underline{T}$ is $q$-commuting} ] \notag \\
			& \ \la \left[T_1^{(m_1-\mu_1)^+} \dotsc T_\alpha^{(m_\alpha+1-\mu_\alpha)^+} \dotsc  T_k^{(m_k-\mu_k)^+}\right] \left[T_1^{*(m_1-\mu_1)^-}\dotsc T_\alpha^{*((m_\alpha+1-\mu_\alpha)^-+1)} \dotsc T_k^{*(m_k-\mu_k)^-}\right]
			x, z  \ra \notag  \\
			&= \underset{1 \leq i<j \leq k}{\prod}q_{ij}^{\gamma_{ij}(m+e_\alpha, \mu)} \underset{j:\alpha<j}{\prod}q_{\alpha j}^{-m_j-(m_j-\mu_j)^-} \underset{1 \leq i<j \leq k}{\prod}q_{ij}^{-s_{ij}(m, \mu)} \notag \\
			& \ \la \left[T_1^{(m_1-\mu_1)^+} \dotsc T_\alpha^{(m_\alpha-\mu_\alpha)^+} \dotsc  T_k^{(m_k-\mu_k)^+}\right] \left[T_1^{*(m_1-\mu_1)^-}\dotsc T_\alpha^{*(m_\alpha-\mu_\alpha)^-} \dotsc T_k^{*(m_k-\mu_k)^-}\right] 
			x, z  \ra,
		\end{align}
		where the last equality follows from \eqref{eqn_110}. Consequently, it follows from \eqref{eqn_108} and \eqref{eqn_111} that
		\begin{align}\label{eqn_112}
			& \quad \ \la \underline{V}^mV_\alpha T_\alpha^*x, \ \underline{V}^\mu z \ra \notag \\	
			&= \underset{1 \leq i<j \leq k}{\prod}q_{ij}^{\gamma_{ij}(m+e_\alpha, \mu)} \underset{j:\alpha<j}{\prod}q_{\alpha j}^{-m_j-(m_j-\mu_j)^-} \underset{1 \leq i<j \leq k}{\prod}q_{ij}^{-s_{ij}(m, \mu)} \underset{1 \leq i<j \leq k}{\prod}q_{ij}^{-\gamma_{ij}(m, \mu)+t_{ij}(m, \mu)}\la \underline{V}^m x, \ \underline{V}^\mu z \ra. \notag \\
		\end{align}
		We show that
		\begin{align}\label{eqn_113}
			\underset{1 \leq i<j \leq k}{\prod}q_{ij}^{\gamma_{ij}(m+e_\alpha, \mu)-\gamma_{ij}(m, \mu)} \underset{j:\alpha<j}{\prod}q_{\alpha j}^{-m_j-(m_j-\mu_j)^-} \underset{1 \leq i<j \leq k}{\prod}q_{ij}^{t_{ij}(m, \mu)-s_{ij}(m, \mu)}=1.
		\end{align}
		By definition of $t_{ij}(m, \mu)$ and $s_{ij}(m, \mu)$, we have that
		\begin{align}\label{eqn_114}
			\underset{1 \leq i<j \leq k}{\prod}q_{ij}^{t_{ij}(m, \mu)-s_{ij}(m, \mu)}
			&= \prod_{\substack{1 \leq i<j \leq k  \\ i \ne \alpha, j \ne \alpha}}q_{ij}^{t_{ij}(m, \mu)-s_{ij}(m, \mu)} \prod_{\substack{1 \leq i<j \leq k  \\ i \ne \alpha, j = \alpha}}q_{ij}^{t_{ij}(m, \mu)-s_{ij}(m, \mu)} \prod_{\substack{1 \leq i<j \leq k  \\ i = \alpha, j \ne \alpha}}q_{ij}^{t_{ij}(m, \mu)-s_{ij}(m, \mu)} \notag \\ 
			&= \prod_{\substack{i: i< \alpha}}q_{i\alpha}^{t_{ij}(m, \mu)-s_{ij}(m, \mu)} \prod_{\substack{j: \alpha<j}}q_{\alpha j}^{t_{ij}(m, \mu)-s_{ij}(m, \mu)}\notag \\
			&= \prod_{\substack{i: i< \alpha}}q_{i\alpha}^{(m_i-\mu_i)^-\left[(m_\alpha-\mu_\alpha)^+-(m_\alpha+1-\mu_\alpha)^+\right]-(m_i-\mu_i)^+\left[(m_\alpha-\mu_\alpha)^--(m_\alpha+1-\mu_\alpha)^-\right]} \notag \\
			& \quad  \prod_{\substack{j: \alpha<j}}q_{\alpha j}^{(m_j-\mu_j)^+\left[(m_\alpha-\mu_\alpha)^--(m_\alpha+1-\mu_\alpha)^-\right]-(m_j-\mu_j)^-\left[(m_\alpha-\mu_\alpha)^+-(m_\alpha+1-\mu_\alpha)^+\right]} \notag \\
			&= \prod_{\substack{i: i< \alpha}}q_{i\alpha}^{-(m_i-\mu_i)^+}   \prod_{\substack{j: \alpha<j}}q_{\alpha j}^{(m_j-\mu_j)^+} \qquad [\text{by \eqref{eqn_110}}].\notag\\
		\end{align} 
		Consequently, we have by \eqref{eqn_114} and \eqref{eqn_115}  that
		\begin{align*}
			&=\left(\underset{1 \leq i<j \leq k}{\prod}q_{ij}^{\gamma_{ij}(m+e_\alpha, \mu)-\gamma_{ij}(m, \mu)}\right) \left(\underset{j:\alpha<j}{\prod}q_{\alpha j}^{-m_j-(m_j-\mu_j)^-}\right) \left(\underset{1 \leq i<j \leq k}{\prod}q_{ij}^{t_{ij}(m, \mu)-s_{ij}(m, \mu)}\right)\\
			&= \left(\prod_{\substack{i: i< \alpha}}q_{i\alpha}^{(m_i-\mu_i)^+}   \prod_{\substack{j: \alpha<j}}q_{\alpha j}^{\mu_j}\right)\left(\underset{j:\alpha<j}{\prod}q_{\alpha j}^{-m_j-(m_j-\mu_j)^-}\right)\left( \prod_{\substack{i: i< \alpha}}q_{i\alpha}^{-(m_i-\mu_i)^+}   \prod_{\substack{j: \alpha<j}}q_{\alpha j}^{(m_j-\mu_j)^+}\right)\\
			&=\prod_{\substack{j: \alpha<j}}q_{\alpha j}^{(m_j-\mu_j)^+-(m_j-\mu_j)^--(m_j-\mu_j)}\\
			&=1,
		\end{align*}
		where the last equality follows from the fact that $n=n^+-n^-$ for all $n \in \Z$. Therefore, the equality in \eqref{eqn_113} holds, and the desired conclusion follows from \eqref{eqn_112}.
	\end{proof}
	
	We now present one of the main results of this article, which generalizes Theorem \ref{thm_104}, originally due to Ga\c spar–Suciu \cite{Gaspar} and Timotin \cite{Timotin}.
	
	\begin{thm}\label{thm_mainI}
		Let $\underline{T}=(T_1, \dotsc, T_k)$ be a $q$-commuting tuple of contractions with $\|q\|=1$ acting on a Hilbert space $\HS$ and let $\underline{V}=(V_1, \dotsc, V_k)$ be a minimal $q$-isometric dilation of $\underline{T}$. Then $\underline{V}$ is $*$-regular if and only if $\underline{V}$ is doubly $q$-commuting.
	\end{thm}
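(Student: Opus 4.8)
The plan is to prove the two implications separately, deriving the forward one directly from Lemmas \ref{lem_106} and \ref{lem_107} and the harder one by an orthogonality/spanning argument resting on Lemmas \ref{lem_101} and \ref{lem_112}.

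\emph{Doubly $q$-commuting implies $*$-regular.} Assume $\underline V$ is doubly $q$-commuting and fix $m=(m_1,\dotsc,m_k)\in\Z^k$ and $h\in\HS$. By Lemma \ref{lem_106}, $\HS$ is jointly invariant for $V_1^*,\dotsc,V_k^*$ with $T_i^*=V_i^*|_\HS$; iterating this gives $T_1^{*m_1^-}\dotsc T_k^{*m_k^-}h=V_1^{*m_1^-}\dotsc V_k^{*m_k^-}h\in\HS$, and applying the dilation identity to this vector yields
\[
\left[T_1^{m_1^+}\dotsc T_k^{m_k^+}\right]\left[T_1^{*m_1^-}\dotsc T_k^{*m_k^-}\right]h=P_\HS\left[V_1^{m_1^+}\dotsc V_k^{m_k^+}\right]\left[V_1^{*m_1^-}\dotsc V_k^{*m_k^-}\right]h.
\]
Since $\underline V$ is a doubly $q$-commuting tuple of contractions, Lemma \ref{lem_107} applied to $\underline V$ commutes the two bracketed factors on the right at the cost of the scalar $\prod_{1\le i<j\le k}q_{ij}^{m_i^-m_j^+-m_i^+m_j^-}$, which is precisely the $*$-regularity identity of Definition \ref{defn_*-reg}.

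\emph{$*$-regular implies doubly $q$-commuting.} By Lemma \ref{lem_101} it suffices to show $V_\beta(\ker V_\alpha^*)\subseteq\ker V_\alpha^*$ for all $\alpha\ne\beta$. I would first record two facts obtained by using $q$-commutativity to move a single $V_\alpha$ to the front of a monomial: (i) $\mathrm{ran}\,V_\alpha=\overline{\mathrm{span}}\{\underline V^\mu z:\mu\in\Z_+^k,\ \mu_\alpha\ge1,\ z\in\HS\}$, so $\ker V_\alpha^*=\KS\ominus\mathrm{ran}\,V_\alpha$ is exactly the orthogonal complement of these vectors; and (ii) $(I-V_\alpha V_\alpha^*)\underline V^n h=0$ whenever $n_\alpha\ge1$, whence $\ker V_\alpha^*=\overline{\mathrm{span}}\{(I-V_\alpha V_\alpha^*)\underline V^{n}h:n\in\Z_+^k,\ n_\alpha=0,\ h\in\HS\}$. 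Moreover, for $x\in\HS$ one has $V_\alpha T_\alpha^*x=V_\alpha V_\alpha^*x$, so Lemma \ref{lem_112} states precisely that $\underline V^m(I-V_\alpha V_\alpha^*)x\perp\underline V^\mu z$ for every $m$ with $m_\alpha=0$, every $\mu$ with $\mu_\alpha\ge1$, and $x,z\in\HS$; combined with (i), this gives $\underline V^m(I-V_\alpha V_\alpha^*)x\in\ker V_\alpha^*$ whenever $m_\alpha=0$.

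\emph{Identifying $\ker V_\alpha^*$ and concluding.} Let $\mathcal N_\alpha=\overline{\mathrm{span}}\{\underline V^m(I-V_\alpha V_\alpha^*)x:m\in\Z_+^k,\ m_\alpha=0,\ x\in\HS\}$; the previous step gives $\mathcal N_\alpha\subseteq\ker V_\alpha^*$, and I claim equality. By (ii) it is enough to prove $(I-V_\alpha V_\alpha^*)\underline V^{n'}h\in\mathcal N_\alpha$ for every $n'$ with $n'_\alpha=0$, which I would do by induction on $|n'|$: for $|n'|\ge1$ write $n'=n''+e_\gamma$ with $\gamma\ne\alpha$, express $\underline V^{n'}h$ as a unimodular scalar times $V_\gamma\underline V^{n''}h$, split $\underline V^{n''}h=V_\alpha V_\alpha^*\underline V^{n''}h+(I-V_\alpha V_\alpha^*)\underline V^{n''}h$, note that $(I-V_\alpha V_\alpha^*)V_\gamma$ annihilates the first summand (after one more use of $q$-commutativity it lies in $\mathrm{ran}\,V_\alpha$), and feed the second summand into the inductive hypothesis. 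The induction closes because $(I-V_\alpha V_\alpha^*)V_\gamma$ sends each generator $\underline V^m(I-V_\alpha V_\alpha^*)x$ of $\mathcal N_\alpha$ (with $m_\alpha=0$) to a scalar multiple of $\underline V^{m+e_\gamma}(I-V_\alpha V_\alpha^*)x$, which already lies in $\ker V_\alpha^*$ and is thus fixed by $I-V_\alpha V_\alpha^*$; hence $(I-V_\alpha V_\alpha^*)V_\gamma$ leaves $\mathcal N_\alpha$ invariant and $\ker V_\alpha^*=\mathcal N_\alpha$. Finally, for $\beta\ne\alpha$ the operator $V_\beta$ carries the generator $\underline V^m(I-V_\alpha V_\alpha^*)x$ to a scalar multiple of $\underline V^{m+e_\beta}(I-V_\alpha V_\alpha^*)x\in\mathcal N_\alpha$, so $V_\beta(\ker V_\alpha^*)=V_\beta\mathcal N_\alpha\subseteq\mathcal N_\alpha=\ker V_\alpha^*$, and Lemma \ref{lem_101} finishes the proof.

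The forward implication is routine once Lemmas \ref{lem_106} and \ref{lem_107} are available; the real work is in the converse. Within it, the step I expect to be the main obstacle is pinning down $\mathcal N_\alpha$ as the correct spanning set for $\ker V_\alpha^*$ and checking that $(I-V_\alpha V_\alpha^*)V_\gamma$ preserves it, since this is where the argument actually uses $*$-regularity. The delicate $q_{ij}$-bookkeeping has already been absorbed into Lemma \ref{lem_112}, which is why I would route everything through that lemma together with Lemma \ref{lem_101} rather than computing directly.
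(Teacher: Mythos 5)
Your proof is correct and follows essentially the same route as the paper: the easy direction via Lemmas \ref{lem_106} and \ref{lem_107}, and the converse by identifying $\ker V_\alpha^*$ with $\overline{\mathrm{span}}\{\underline V^m(I-V_\alpha V_\alpha^*)x : m\in\Z_+^k,\ m_\alpha=0,\ x\in\HS\}$ through Lemma \ref{lem_112} and then invoking Lemma \ref{lem_101}. The only (cosmetic) difference is that your induction on $|n'|$ is unnecessary: since $\underline V^m V_\alpha V_\alpha^*x\in\mathrm{ran}\,V_\alpha$ and $\underline V^m(I-V_\alpha V_\alpha^*)x\in\ker V_\alpha^*$, one gets $(I-V_\alpha V_\alpha^*)\underline V^m x=\underline V^m(I-V_\alpha V_\alpha^*)x$ in one line, which is how the paper closes this step.
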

	
	\begin{proof}
		Suppose $T_iT_j=q_{ij}T_jT_i$ and $V_iV_j=q_{ij}V_jV_i$, where $q_{ij} \in \T$ for $1 \leq i, j \leq k$ with $i < j$. Assume that $\underline{V}=(V_1, \dotsc, V_k)$ is a minimal $*$-regular $q$-isometric dilation of $\underline{T}$ acting on a space $\mathcal{K}$.  Then
		\[
		\mathcal{K}=\ov{\text{span}}\left\{V_1^{m_1}\dotsc V_k^{m_k}x : m_1, \dotsc, m_k \in \Z_+, \ x \in \HS \right\}.
		\]
		Choose $\alpha, \beta \in \{1, \dotsc, k\}$ with $\alpha \ne \beta$. It is evident that
		\[
		V_\alpha \mathcal{K}=\ov{\text{span}}\left\{V_\alpha V_1^{m_1}\dotsc V_k^{m_k}x : m_1, \dotsc, m_k \in \Z_+, \ x \in \HS \right\}=\ov{\text{span}}\left\{\underline{V}^mx: x \in \HS, m \in \Z_+^k, m \geq e_\alpha\right\}.
		\]
		It is not difficult to see that 
		\[
		\text{ker}V_\alpha^*=\mathcal{K} \ominus V_\alpha \mathcal{K}=\ov{\text{span}}\left\{P_{\text{ker}V_\alpha^*} \ V_1^{m_1}\dotsc V_k^{m_k}x : m_1, \dotsc, m_k \in \Z_+,\  m_\alpha=0, \ x \in \HS \right\}.
		\]
		Fix $x \in \HS$ and define $y_m=\underline{V}^m(x-V_\alpha T_\alpha^*x)$ for $m=(m_1, \dotsc, m_k) \in \Z_+^k$ with $m_\alpha=0$. Let $z \in \HS$ and $\mu=(\mu_1, \dotsc, \mu_k) \in \Z_+^k$ with $\mu_\alpha \geq 1$. It follows from Lemma \ref{lem_112} that
		\[
		\la y_m, \underline{V}^\mu z \ra=\la \underline{V}^mx-\underline{V}^mV_\alpha T_\alpha^*x, \ \underline{V}^\mu z \ra=\la \underline{V}^mx, \underline{V}^\mu z\ra -\la \underline{V}^mV_\alpha T_\alpha^*x, \ \underline{V}^\mu z \ra=0
		\]
		and so, $y_m \in \mathcal{K} \ominus V_\alpha \mathcal{K}=\text{ker}V_\alpha^*$. Also, note that
		\[
		\underline{V}^mV_\alpha T_\alpha^*x=V_1^{m_1}\dotsc V_\alpha^{m_\alpha}\dotsc V_k^{m_k}V_\alpha T_\alpha^*x=\prod_{\substack{j: \alpha<j}}q_{\alpha j}^{-m_j}\left(V_1^{m_1}\dotsc V_\alpha^{m_\alpha+1}\dotsc V_k^{m_k}\right)T_\alpha^*x \in V_\alpha \mathcal{K}
		\]
		and thus, $P_{\text{ker}V_\alpha^*}(\underline{V}^mV_\alpha T_\alpha^*x)=0$. Consequently, we have that 
		\begin{align}\label{eqn_116}
			\underline{V}^m(x-V_\alpha T_\alpha^*x)=y_m=P_{\text{ker}V_\alpha^*}y_m=P_{\text{ker}V_\alpha^*} (\underline{V}^mx)-P_{\text{ker}V_\alpha^*}(\underline{V}^mV_\alpha T_\alpha^*x)=P_{\text{ker}V_\alpha^*} (\underline{V}^mx)
		\end{align}
		for all $x \in \HS$ and $m=(m_1, \dotsc, m_k) \in \Z_+^k$ with $m_\alpha=0$. This shows that
		\begin{align}\label{eqn_117}
			\text{ker} V_\alpha^*=\ov{\text{span}}\left\{\underline{V}^m(x-V_\alpha T_\alpha^*x) : m=(m_1, \dotsc, m_k) \in \Z_+^k,\  m_\alpha=0, \ x \in \HS \right\}.
		\end{align}
		Let $x \in \HS$ and $m=(m_1, \dotsc, m_k) \in \Z_+^k$ with $m_\alpha=0$. Since $\alpha \ne \beta$, we have that $(m + e_\beta)_\alpha = m_\alpha = 0$, where $(m + e_\beta)_\alpha$ denotes the $\alpha$-th component of $(m + e_\beta)$. It follows that
		\begin{align*}
			V_\beta \underline{V}^m(x-V_\alpha T_\alpha^*x) 
			=V_\beta V_1^{m_1}\dotsc V_\beta^{m_\beta}\dotsc V_k^{m_k}(x-V_\alpha T_\alpha^*x)
			&=\underset{j: \beta<j}{\prod}q_{\beta j}^{m_j}V_1^{m_1}\dotsc V_\beta^{m_\beta+1}\dotsc V_k^{m_k}(x-V_\alpha T_\alpha^*x)\\
			&=\underset{j: \beta<j}{\prod}q_{\beta j}^{m_j} \ \underline{V}^{m+e_\beta}(x-V_\alpha T_\alpha^*x)\\
			&=\underset{j: \beta<j}{\prod}q_{\beta j}^{m_j} \ P_{\text{ker}V_\alpha^*}(\underline{V}^{m+e_\beta}x) \qquad [\text{by \eqref{eqn_116}}]
		\end{align*}
		and thus, $V_\beta \underline{V}^m(x-V_\alpha T_\alpha^*x) \in \text{ker} V_\alpha^*$. So, we have by \eqref{eqn_117} that $V_\beta \text{ker} V_\alpha^* \subseteq \text{ker} V_\alpha^*$. It follows from Lemma \ref{lem_101} that $V_\alpha V_\beta^* =\ov{q}_{\alpha\beta}V_\beta^* V_\alpha$ and so, $\underline{V}$ is doubly $q$-commuting. 
		
		\vspace{0.2cm}
		
		To see the converse, assume that $V_iV_j=q_{ij}V_jV_i$ and $V_iV_j^*=\ov{q}_{ij}V_j^*V_i$, where $q_{ij} \in \T$ for $1 \leq i, j \leq k$ with $i \ne j$. Let $x, y \in \HS$ and let $m_1, \dotsc, m_k \in \Z$. Then
		\begin{align*}
			& \quad \ \la \left[V_1^{*m_1^-}\dotsc V_k^{*m_k^-}\right]\left[V_1^{m_1^+} \dotsc V_k^{m_k^+}\right]x, y \ra \\
			&=\underset{1\leq i<j \leq k}{\prod}q_{ij}^{-m_i^-m_j^++m_i^+m_j^-}\la  \left[V_1^{m_1^+} \dotsc V_k^{m_k^+}\right]\left[V_1^{*m_1^-}\dotsc V_k^{*m_k^-}\right]x, y \ra \quad [\text{by Lemma \ref{lem_107}} ]\\
			&=\underset{1\leq i<j \leq k}{\prod}q_{ij}^{-m_i^-m_j^++m_i^+m_j^-}\la  \left[V_1^{m_1^+} \dotsc V_k^{m_k^+}\right]\left[T_1^{*m_1^-}\dotsc T_k^{*m_k^-}\right]x, y \ra \quad [\text{by Lemma \ref{lem_106}} ]\\
			&=\underset{1\leq i<j \leq k}{\prod}q_{ij}^{-m_i^-m_j^++m_i^+m_j^-}\la  P_{\HS}\left[V_1^{m_1^+} \dotsc V_k^{m_k^+}\right]\left[T_1^{*m_1^-}\dotsc T_k^{*m_k^-}\right]x, y \ra \\
			&=\underset{1\leq i<j \leq k}{\prod}q_{ij}^{-m_i^-m_j^++m_i^+m_j^-}\la  \left[T_1^{m_1^+} \dotsc T_k^{m_k^+}\right]\left[T_1^{*m_1^-}\dotsc T_k^{*m_k^-}\right]x, y \ra,
		\end{align*}
		where the last equality follows from the fact that $\underline{V}$ is a $q$-isometric dilation of $\underline{T}$. Consequently, $\underline{V}$ is $*$-regular and the proof is now complete.
	\end{proof}
	
We now present a few applications of Theorem \ref{thm_mainI}. In this direction, our next result generalizes Proposition \ref{prop_105}, which is due to Ga\c spar–Suciu \cite{Gaspar}. Indeed, the next theorem characterizes when a $q$-commuting tuple of contractions with $\|q\|=1$ becomes a doubly $q$-commuting tuple. 
	
	\begin{thm}\label{thm_mainII}
		For a $q$-commuting tuple of contractions  $\underline{T}=(T_1, \dotsc, T_k)$ with $\|q\|=1$ acting on a Hilbert space $\HS$, the following statements are equivalent:
		\begin{enumerate}
			\item $\underline{T}$ is doubly $q$-commuting;
			\item $\underline{T}$ has a minimal regular $q$-isometric dilation which is doubly $q$-commuting;
			\item $\underline{T}$ has a minimal regular $q$-unitary dilation $\underline{U}=(U_1, \dotsc, U_k)$ such that $(U_1^*, \dotsc, U_k^*)$ is a  minimal regular $q$-unitary dilation $(T_1^*, \dotsc, T_k^*)$.
		\end{enumerate} 
	\end{thm}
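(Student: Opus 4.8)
The plan is to establish the cycle of implications $(1)\Rightarrow(3)\Rightarrow(2)\Rightarrow(1)$, leaning on Theorem \ref{thm_mainI}, Lemma \ref{lem_109}, Lemma \ref{lem_208}, and Corollary \ref{cor_112}.

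\emph{Step 1: $(1)\Rightarrow(3)$.} Assume $\underline{T}$ is doubly $q$-commuting. By Theorem \ref{thm_001}(2), $\underline{T}$ admits a regular $q$-unitary dilation, and hence a minimal one $\underline{U}=(U_1,\dotsc,U_k)$ after passing to the cyclic subspace $\ov{\text{span}}\{U_1^{m_1}\dotsc U_k^{m_k}h:m_i\in\Z,\ h\in\HS\}$ (minimality is preserved because restricting to this reducing subspace does not disturb the defining identity). By Corollary \ref{cor_112}, the restriction $\underline{V}$ of $\underline{U}$ to $\mathcal{K}_+=\ov{\text{span}}\{U_1^{m_1}\dotsc U_k^{m_k}h:m_i\in\Z_+,\ h\in\HS\}$ is a minimal regular $q$-isometric dilation of $\underline{T}$; and the restriction of $(U_1^*,\dotsc,U_k^*)$ to $\mathcal{K}_-$ is a minimal $*$-regular $q$-isometric dilation of $(T_1^*,\dotsc,T_k^*)$. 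I must now show $(U_1^*,\dotsc,U_k^*)$ is a minimal regular $q$-unitary dilation of $(T_1^*,\dotsc,T_k^*)$, equivalently (by Lemma \ref{lem_109}) that $\underline{U}$ is a minimal $*$-regular $q$-unitary dilation of $\underline{T}$. Since $\underline{V}$ above is a minimal regular $q$-isometric dilation of the doubly $q$-commuting tuple $\underline{T}$, and since a minimal regular $q$-isometric dilation of a doubly $q$-commuting tuple is forced to be doubly $q$-commuting — here is where the argument feeding into Theorem \ref{thm_mainI} is used: one checks that $V_\beta\,\text{ker}\,V_\alpha^*\subseteq\text{ker}\,V_\alpha^*$ using the regularity identity exactly as in the proof of that theorem, so Lemma \ref{lem_101} gives double $q$-commutativity of $\underline{V}$ — it follows that $\underline{V}$ is $*$-regular by Theorem \ref{thm_mainI}. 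Then Lemma \ref{lem_208} (the $*$-regular case) lifts $*$-regularity of $\underline{V}$ to the minimal $q$-unitary extension, which by minimality coincides with $\underline{U}$; hence $\underline{U}$ is a minimal $*$-regular $q$-unitary dilation of $\underline{T}$, which is $(3)$.

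\emph{Step 2: $(3)\Rightarrow(2)$.} Suppose $\underline{U}$ is a minimal regular $q$-unitary dilation of $\underline{T}$ with $(U_1^*,\dotsc,U_k^*)$ a minimal regular $q$-unitary dilation of $(T_1^*,\dotsc,T_k^*)$. By Corollary \ref{cor_112}, the restriction $\underline{V}$ of $\underline{U}$ to $\mathcal{K}_+$ is a minimal regular $q$-isometric dilation of $\underline{T}$. It remains to see $\underline{V}$ is doubly $q$-commuting. By Lemma \ref{lem_109}, the hypothesis on $(U_1^*,\dotsc,U_k^*)$ means $\underline{U}$ is also a minimal $*$-regular $q$-unitary dilation of $\underline{T}$; restricting to $\mathcal{K}_+$ (the same computation as in Corollary \ref{cor_112}, now for the $*$-regular identity, using $\mathcal{K}_+$ invariant under $U_i$) shows $\underline{V}$ is moreover $*$-regular. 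But a minimal $q$-isometric dilation that is $*$-regular is doubly $q$-commuting by Theorem \ref{thm_mainI}. Hence $\underline{V}$ is a minimal regular $q$-isometric dilation of $\underline{T}$ which is doubly $q$-commuting, giving $(2)$.

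\emph{Step 3: $(2)\Rightarrow(1)$.} Let $\underline{V}$ be a minimal regular $q$-isometric dilation of $\underline{T}$ that is doubly $q$-commuting. Since $\underline{V}$ is both a $q$-isometric dilation and doubly $q$-commuting, Theorem \ref{thm_mainI} forces $\underline{V}$ to be $*$-regular; alternatively one argues directly. In any case, for $h\in\HS$ and $i\neq j$, one computes $T_iT_j^*h$ by realizing $T_i^*=V_i^*|_\HS$, $T_j^*=V_j^*|_\HS$ from Lemma \ref{lem_106}, passing through the doubly $q$-commuting relation $V_iV_j^*=\ov{q}_{ij}V_j^*V_i$ on $\mathcal{K}$, and projecting back: $T_iT_j^*h=P_\HS V_iV_j^*h=\ov{q}_{ij}P_\HS V_j^*V_ih=\ov{q}_{ij}V_j^*P_\HS V_ih=\ov{q}_{ij}T_j^*T_ih$, where $P_\HS V_i h = T_i h$ follows because $\HS$ reduces nothing but $P_\HS V_i P_\HS = T_i P_\HS$ from the proof of Lemma \ref{lem_106}, together with $V_j^*$ leaving $\HS$ invariant. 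Thus $\underline{T}$ is doubly $q$-commuting, establishing $(1)$ and closing the cycle.

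\emph{Main obstacle.} The only delicate point is Step 1: producing a minimal \emph{regular} $q$-unitary dilation $\underline{U}$ from Theorem \ref{thm_001}(2) whose adjoint tuple is again a minimal regular dilation. The cleanest route is the one above — get the regular $q$-isometric dilation $\underline{V}$, observe it is doubly $q$-commuting (the $\text{ker}\,V_\alpha^*$ argument from the proof of Theorem \ref{thm_mainI}), invoke Theorem \ref{thm_mainI} to upgrade it to $*$-regular, then use Lemma \ref{lem_208} in its $*$-regular form to pass to the minimal $q$-unitary extension, which is simultaneously the minimal regular and minimal $*$-regular $q$-unitary dilation of $\underline{T}$. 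The bookkeeping that the minimal $q$-unitary extension of $\underline{V}$ genuinely equals the minimal $q$-unitary dilation $\underline{U}$ — so that both the $\mathcal{K}_+$ and $\mathcal{K}_-$ cyclic spans sit inside the same $\mathcal{K}_0$ — needs the remark after the $q$-unitary extension definition and a short minimality check, but presents no real difficulty.
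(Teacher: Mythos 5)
Your cycle $(1)\Rightarrow(3)\Rightarrow(2)\Rightarrow(1)$ is the same as the paper's, and your Step 2 essentially coincides with the paper's argument. But Step 3 contains a genuine error. Your ``direct'' computation hinges on the interchange $P_\HS V_j^*V_ih=V_j^*P_\HS V_ih$, which you do not justify and which is false in general: Lemma \ref{lem_106} gives that $\HS$ is invariant for $V_j^*$, i.e.\ $V_j^*P_\HS=P_\HS V_j^*P_\HS$, but $P_\HS V_j^*(I-P_\HS)$ need not vanish unless $\HS$ also reduces $V_j$ (which would force $T_j$ to be an isometry). Worse, your computation never uses regularity of the dilation, so if it were correct it would show that \emph{any} doubly $q$-commuting minimal $q$-isometric dilation forces $\underline{T}$ to be doubly $q$-commuting --- and that is false: for the unilateral shift $S$, the commuting pair $(S^*,S^*)$ is not doubly commuting, yet it admits a doubly commuting (indeed $*$-regular) minimal isometric dilation, because $(S,S)$ is a pair of commuting isometries and hence regularly dilatable. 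The correct route, which is the paper's, is to use \emph{both} identities: regularity gives $[T^{*m^-}][T^{m^+}]=P_\HS[V^{*m^-}][V^{m^+}]|_\HS$, Theorem \ref{thm_mainI} upgrades the doubly $q$-commuting $\underline{V}$ to a $*$-regular one so that the right-hand side equals $\prod q_{ij}^{-m_i^-m_j^++m_i^+m_j^-}[T^{m^+}][T^{*m^-}]$, and specializing $m=e_i-e_j$ yields the doubly $q$-commuting relations for $\underline{T}$.

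Step 1 also has a gap as written, though a repairable one: you claim the kernel argument $V_\beta\,\mathrm{ker}\,V_\alpha^*\subseteq\mathrm{ker}\,V_\alpha^*$ goes through ``using the regularity identity exactly as in the proof of Theorem \ref{thm_mainI}'', but that proof runs through Lemma \ref{lem_112}, which is stated for \emph{$*$-regular} dilations, not regular ones. The fix is to note that when $\underline{T}$ is doubly $q$-commuting, Lemma \ref{lem_107} applied to $\underline{T}$ converts the regularity identity into the $*$-regularity identity directly, after which Theorem \ref{thm_mainI} applies. In fact the paper's $(1)\Rightarrow(3)$ avoids the isometric detour entirely: since $q$-commuting unitaries are automatically doubly $q$-commuting, one applies Lemma \ref{lem_107} to both $\underline{T}$ and $\underline{U}$ in the regular dilation identity to flip the order of the starred and unstarred factors on both sides, which is precisely the statement that $(U_1^*,\dotsc,U_k^*)$ regularly dilates $(T_1^*,\dotsc,T_k^*)$. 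Your longer route through $\underline{V}$, Theorem \ref{thm_mainI}, and Lemma \ref{lem_208} can be made to work (modulo the uniqueness-of-minimal-extension bookkeeping you flag), but only after the fix above.
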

	
	\begin{proof} We prove that $(1) \implies (3) \implies (2) \implies (1)$. 
		
		\vspace{0.2cm}
		
		\noindent $(1) \implies (3)$. Let $\underline{T}$ be doubly $q$-commuting. It follows from Theorem \ref{thm_001} that $\underline{T}$ admits a regular $q$-unitary dilation, say $\underline{N}=(N_1, \dotsc, N_k)$, acting on a Hilbert space $\mathcal{K}_0$. The space 
		\[
		\mathcal{K}=\ov{\text{span}}\left\{N_1^{m_1}\dotsc N_k^{m_k}h : m_1, \dotsc, m_k \in \Z, \ h \in \HS \right\}
		\]
		is a closed joint reducing subspace of $\underline{N}$. So, the tuple
		$
		\underline{U}=(U_1, \dotsc, U_k)=(N_1|_{\mathcal{K}}, \dotsc, N_k|_{\mathcal{K}})
		$	
		is a minimal regular $q$-unitary dilation of $\underline{T}$. Since $\underline{U}$ and $\underline{T}$ are doubly $q$-commuting tuples that satisfy the same $q$-intertwining relations, it follows from Lemma \ref{lem_107} that $(U_1^*, \dotsc, U_k^*)$ is a  minimal regular $q$-unitary dilation $(T_1^*, \dotsc, T_k^*)$. 
		
		\vspace{0.2cm}
		
		\noindent $(3) \implies (2)$. Suppose $\underline{T}$ has a minimal regular $q$-unitary dilation $\underline{U}=(U_1, \dotsc, U_k)$ acting on a Hilbert space $\mathcal{K}_0$ such that $(U_1^*, \dotsc, U_k^*)$ is a  minimal regular $q$-unitary dilation $(T_1^*, \dotsc, T_k^*)$. By minimality condition, we have that $\mathcal{K}_0=\ov{\text{span}}\left\{U_1^{m_1}\dotsc U_k^{m_k}h : m_1, \dotsc, m_k \in \Z, h \in \HS \right\}$. The space given by
		\[
		\mathcal{K}_+=\ov{\text{span}}\left\{U_1^{m_1}\dotsc U_k^{m_k}h : m_1, \dotsc, m_k \in \Z_+, h \in \HS \right\}
		\]
		is a closed joint invariant subspace of $\underline{U}$. It follows from Corollary \ref{cor_112} that the tuple
		\[
		\underline{V}=(V_1, \dotsc, V_k)=(U_1|_{{\mathcal{K}_+}}, \dotsc, U_k|_{{\mathcal{K}}_+})
		\]
		is a minimal regular $q$-isometric dilation of $\underline{T}$. Let $m_1, \dotsc, m_k \in \Z$. It is evident that
		\begin{align}\label{eqn_118}
			U_k^{m_k^-}\dotsc U_1^{m_1^-}|_{\mathcal{K}_+}=V_k^{m_k^-}\dotsc V_1^{m_1^-} \quad \text{and so,} \quad P_{\mathcal{K}_+}U_1^{*m_1^-}\dotsc U_k^{*m_k^-}|_{\mathcal{K}_+}=V_1^{*m_1^-}\dotsc V_k^{*m_k^-},
		\end{align}
		where $P_{\mathcal{K}_+}$ is the orthogonal projection of $\mathcal{K}_0$ onto $\mathcal{K}_+$. Since$(U_1^*, \dotsc, U_k^*)$ is a  minimal regular $q$-unitary dilation $(T_1^*, \dotsc, T_k^*)$, we have
		\begin{align*}
			& \quad \left[T_1^{m_1^+} \dotsc T_k^{m_k^+}\right]\left[T_1^{*m_1^-}\dotsc T_k^{*m_k^-}\right]\\
			&=P_\mathcal{H}\left[U_1^{m_1^+} \dotsc U_k^{m_k^+}\right]\left[U_1^{*m_1^-}\dotsc U_k^{*m_k^-}\right]\bigg|_\mathcal{H}\\
			&=\underset{1\leq i<j \leq k}{\prod}q_{ij}^{m_i^-m_j^+-m_i^+m_j^-}P_\HS \left[U_1^{*m_1^-}\dotsc U_k^{*m_k^-}\right]\left[U_1^{m_1^+} \dotsc U_k^{m_k^+}\right]\bigg|_{\HS} \quad [\text{by Lemma \ref{lem_107}}]\\
			&=\underset{1\leq i<j \leq k}{\prod}q_{ij}^{m_i^-m_j^+-m_i^+m_j^-}P_\HS \left[V_1^{*m_1^-}\dotsc V_k^{*m_k^-}\right]\left[V_1^{m_1^+} \dotsc V_k^{m_k^+}\right]\bigg|_{\HS} \quad [\text{by \eqref{eqn_118}}]
		\end{align*}
		and so, $\underline{V}$ is a $*$-regular dilation of $\underline{T}$. It follows from  Theorem \ref{thm_mainI} that $\underline{V}$ is doubly $q$-commuting.
		
		\vspace{0.2cm}
		
		\noindent $(2) \implies (1)$. Assume that $\underline{T}$ has a minimal regular $q$-isometric dilation, say $\underline{V}$, which is doubly $q$-commuting. It follows from Theorem \ref{thm_mainI} that $\underline{V}$ is a $*$-regular dilation of $\underline{T}$. Thus, we have 
		\begin{align*}
			& \quad \left[T_1^{*m_1^-}\dotsc T_k^{*m_k^-}\right]\left[T_1^{m_1^+} \dotsc T_k^{m_k^+}\right]\\
			&=P_\mathcal{H}\left[V_1^{*m_1^-}\dotsc V_k^{*m_k^-}\right]\left[V_1^{m_1^+} \dotsc V_k^{m_k^+}\right]\bigg|_\mathcal{H} \qquad [\text{as $\underline{V}$ is regular}]\\
			&=	\underset{1\leq i<j \leq k}{\prod}q_{ij}^{-m_i^-m_j^++m_i^+m_j^-} \left[T_1^{m_1^+} \dotsc T_k^{m_k^+}\right]\left[T_1^{*m_1^-}\dotsc T_k^{*m_k^-}\right] \qquad [\text{as $\underline{V}$ is $*$-regular}]
		\end{align*}
		for all $m_1, \dotsc, m_k \in \Z$ and so, $\underline{T}$ is doubly $q$-commuting. The proof is now complete.
	\end{proof}
	
	Recall that a coisometry is a Hilbert space operator whose adjoint is an isometry. A coisometry is said to be pure if its adjoint is a pure isometry. The next result is an follows directly from Theorem \ref{thm_mainI}, and we outline its proof briefly.
	
	\begin{cor}
		A $q$-commuting tuple of coisometries with $\|q\|=1$ has a minimal regular $q$-unitary dilation if and only if it is doubly $q$-commuting.
	\end{cor}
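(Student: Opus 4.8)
The plan is to deduce the result from Theorem \ref{thm_mainI} by first producing a canonical minimal $q$-isometric dilation of a $q$-commuting tuple of coisometries and checking that it is automatically doubly $q$-commuting precisely when the original tuple is. Suppose $\underline{T}=(T_1,\dotsc,T_k)$ is a $q$-commuting tuple of coisometries with $\|q\|=1$, so that $(T_1^*,\dotsc,T_k^*)$ is a $q$-commuting tuple of pure isometries (up to the conjugation in the $q_{ij}$'s). First I would observe that a $q$-commuting tuple of coisometries trivially dilates: the tuple $\underline{T}$ itself, viewed on $\HS$, already satisfies $T_1^{m_1}\dotsc T_k^{m_k}=P_\HS T_1^{m_1}\dotsc T_k^{m_k}|_\HS$. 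However, the $T_i$ are generally not isometries, so the genuine minimal $q$-isometric dilation is obtained by the standard construction: dilate the $q$-commuting tuple of isometries $(T_1^*,\dotsc,T_k^*)$ to a minimal $q$-unitary extension $(W_1,\dotsc,W_k)$ (which exists by Theorem \ref{thm_001}(1)), and then $\underline{V}=(W_1^*,\dotsc,W_k^*)$ is a minimal $q$-isometric dilation of $\underline{T}$ on the same enlarged space. Here I would verify minimality in the sense required and that $\underline V$ really is a tuple of isometries, which is immediate since each $W_i$ is unitary.

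Next I would invoke Theorem \ref{thm_mainI}: the minimal $q$-isometric dilation $\underline{V}$ is $*$-regular if and only if $\underline{V}$ is doubly $q$-commuting. So the corollary reduces to showing: $\underline{T}$ is doubly $q$-commuting $\iff$ $\underline{V}$ is doubly $q$-commuting, together with the identification (from Lemma \ref{lem_208} or Corollary \ref{cor_112}) that the minimal $q$-unitary extension of $\underline{V}$ is the minimal regular $q$-unitary dilation exactly when $\underline{V}$ is $*$-regular. The reverse implication ($\underline V$ doubly $q$-commuting $\Rightarrow$ $\underline T$ doubly $q$-commuting) is easy: $\HS$ is a joint coinvariant subspace for $\underline V$ with $T_i=P_\HS V_i|_\HS$; in fact, since each $T_i$ is a coisometry, $\HS$ is actually reducing for $V_i$ in the relevant sense, and the doubly $q$-commuting relations $V_iV_j^*=\bar q_{ij}V_j^*V_i$ compress directly to $\HS$. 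For the forward implication I would use the construction: $\underline T$ doubly $q$-commuting means $(T_1^*,\dotsc,T_k^*)$ is a doubly $q$-commuting tuple of isometries, and by Theorem \ref{thm_001}(1) (or Theorem 6.2 of \cite{Jeu}) its minimal $q$-unitary extension $(W_1,\dotsc,W_k)$ is again doubly $q$-commuting — this is where I would lean on the known fact that the minimal unitary extension preserves double $q$-commutativity — hence $\underline V=(W_1^*,\dotsc,W_k^*)$ is doubly $q$-commuting.

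The main obstacle I anticipate is the bookkeeping needed to confirm that passing to adjoints and reconjugating the $q_{ij}$'s is done consistently: a coisometry tuple $q$-commuting with family $q$ corresponds to its adjoints being isometries that $q$-commute with the \emph{conjugate} family $\bar q$, and one must make sure the doubly $q$-commuting condition is symmetric under this passage (it is, since $T_iT_j^*=\bar q_{ij}T_j^*T_i$ is equivalent to $T_j^*T_i=q_{ij}T_iT_j^*$, which is the corresponding relation for the adjoint tuple). Once that is pinned down, the rest is assembling the pieces: $\underline T$ doubly $q$-commuting $\Rightarrow$ $\underline V$ doubly $q$-commuting (via the structure of minimal unitary extensions) $\Rightarrow$ $\underline V$ is $*$-regular (Theorem \ref{thm_mainI}) $\Rightarrow$ its minimal $q$-unitary extension is a minimal regular $q$-unitary dilation of $\underline T$ (Lemma \ref{lem_208}); and conversely, a minimal regular $q$-unitary dilation restricts (Corollary \ref{cor_112}) to a minimal regular, hence $*$-regular — since for coisometries regular forces $*$-regular by a direct computation using $T_i^*$ isometric — $q$-isometric dilation, which by Theorem \ref{thm_mainI} is doubly $q$-commuting, and this pushes down to $\underline T$ as in the proof of Theorem \ref{thm_mainII}$(2)\Rightarrow(1)$. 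I would present this as a short chain of implications with the coisometry hypothesis invoked only to guarantee existence of the $q$-isometric dilation and the equivalence of regular and $*$-regular in this special case.
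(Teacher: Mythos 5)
Your overall strategy is the same as the paper's: both arguments turn on the observation that, because each $T_i^*$ is an isometry and $T_i^*=P_\HS U_i^*|_\HS$ with $U_i^*$ an isometry, one actually has $T_i^*=U_i^*|_\HS$, and both then reduce the hard direction to Theorem \ref{thm_mainII}. (The paper verifies condition (3) of Theorem \ref{thm_mainII} and cites $(3)\Rightarrow(1)$; you go through the restricted isometric dilation and condition (2), which amounts to re-traversing the paper's $(3)\Rightarrow(2)\Rightarrow(1)$ chain.) However, the pivotal step of your converse --- ``for coisometries regular forces $*$-regular by a direct computation using $T_i^*$ isometric'' --- is asserted rather than proved, and the gloss hides the actual content. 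The direct computation that $T_i^*=U_i^*|_\HS$ buys you is
\[
T_1^{m_1^+}\dotsc T_k^{m_k^+}T_1^{*m_1^-}\dotsc T_k^{*m_k^-}=P_\HS U_1^{m_1^+}\dotsc U_k^{m_k^+}U_1^{*m_1^-}\dotsc U_k^{*m_k^-}\big|_\HS,
\]
i.e.\ condition (3) of Theorem \ref{thm_mainII}, in which the starred factors sit on the \emph{right}. The $*$-regularity condition of Definition \ref{defn_*-reg} has them on the \emph{left}, and the passage from one to the other is not free: it uses Lemma \ref{lem_107} applied to the unitaries $U_i$, which is legitimate only because a $q$-commuting tuple of unitaries is automatically doubly $q$-commuting. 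Indeed, given that $\underline V$ is a regular dilation, ``$\underline V$ is $*$-regular'' is \emph{equivalent} to the operator identity $[T^{m^+}][T^{*m^-}]=\prod q_{ij}^{m_i^-m_j^+-m_i^+m_j^-}[T^{*m^-}][T^{m^+}]$, which is essentially the conclusion you are after; so this step cannot be waved through.

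Two smaller points. First, your intermediate claim that ``$\underline V$ doubly $q$-commuting $\Rightarrow$ $\underline T$ doubly $q$-commuting'' by compressing $V_iV_j^*=\ov q_{ij}V_j^*V_i$ to $\HS$ does not work: $\HS$ is only coinvariant for the $V_i$ (Lemma \ref{lem_106}), so $P_\HS V_j^*V_i|_\HS\neq T_j^*T_i$ in general, and the implication is in fact false without regularity of the dilation; your final chain correctly routes this through Theorem \ref{thm_mainII}$(2)\Rightarrow(1)$ instead, so keep only that version. Second, the forward direction is over-engineered and slightly misquotes Lemma \ref{lem_208}: from $*$-regularity of $\underline V$ that lemma yields a minimal $*$-regular (not regular) $q$-unitary dilation, and to upgrade you would again need Lemma \ref{lem_107} on both $\underline T$ and $\underline V$; since Theorem \ref{thm_mainII}$(1)\Rightarrow(3)$ already hands you a minimal regular $q$-unitary dilation of any doubly $q$-commuting tuple, the whole construction via the unitary extension of $(T_1^*,\dotsc,T_k^*)$ is unnecessary. (Also, the adjoints of coisometries are isometries, not ``pure isometries''; this is harmless but should be corrected.)
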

	
	\begin{proof}
		Let $\underline{T}=(T_1, \dotsc, T_k)$ be a $q$-commuting tuple of coisometries with $\|q\|=1$ acting on a Hilbert space $\HS$ and let $\underline{U}=(U_1, \dotsc, U_k)$ be a minimal regular $q$-unitary dilation of $\underline{T}$. By minimality, $\underline{U}$ acts on the space $\mathcal{K}=\ov{\text{span}}\left\{U_1^{m_1}\dotsc U_k^{m_k}x: m_1, \dotsc, m_k \in \Z, \ x \in \HS \right\}$. Moreover, $T_i^*=P_\HS U_i^*|_{\HS}$ for $1 \leq i \leq k$. Since $T_i^*$ and $U_i^*$ are isometries, $T_i^*=U_i^*|_{\HS}$ for $1 \leq i \leq k$. Then
		\[
		T_1^{m_1^+} \dotsc T_k^{m_k^+}T_1^{*m_1^-}\dotsc T_k^{*m_k^-}=T_1^{m_1^+} \dotsc T_k^{m_k^+}U_1^{*m_1^-}\dotsc U_k^{*m_k^-}\bigg|_\HS=P_\HS U_1^{m_1^+} \dotsc U_k^{m_k^+}U_1^{*m_1^-}\dotsc U_k^{*m_k^-}\bigg|_\HS
		\] 
		and so, $(U_1^*, \dotsc, U_k^*)$ is a  minimal regular $q$-unitary dilation of $(T_1^*, \dotsc, T_k^*)$. We have by Theorem \ref{thm_mainII} that $\underline{T}$ is doubly commuting. The converse follows directly from Theorem \ref{thm_mainII}.
	\end{proof}
	
Another application of Theorem \ref{thm_mainI} yields a structure theorem for $q$-commuting tuples of contractions with $\|q\|=1$ having a regular $q$-unitary dilation. To this end, consider a $q$-commuting tuple of contractions $\underline{T} = (T_1, \dotsc, T_k)$ with $\|q\|=1$ that admits a regular $q$-unitary dilation. Evidently, $\underline{T}$ has a minimal regular $q$-unitary dilation. By Corollary \ref{cor_112}, $(T_1^*, \dotsc, T_k^*)$ has a minimal $*$-regular $q$-isometric dilation, say $\underline{V}=(V_1, \dotsc, V_k)$, on a Hilbert space $\KS$ containing $\HS$. We have by Lemma \ref{lem_106} that $\HS$ is a joint invariant subspace for $V_1^*, \dotsc, V_k^*$ and $T_i=V_i^*|_\HS$ for $1 \leq i \leq k$. By Theorem \ref{thm_mainI}, $\underline{V}$ is doubly $q$-commuting. Thus, the tuple $\underline{N}$ given by 
	\[
	\underline{N}=(N_1, \dotsc, N_k)=(V_1^*, \dotsc, V_k^*)
	\]
	is doubly $q$-commuting and consists of coisometries such that $\underline{T}=(N_1|_\HS, \dotsc, N_k|_\HS)$. It follows from Wold type decomposition for doubly $q$-commuting tuple of isometries (see \cite{Maji, RI, RII}) that there exist $2^k$ joint reducing subspaces $\left\{\KS_\eta : \eta \subseteq \{1, \dotsc, k\}\right\}$ for $V_1, \dotsc, V_k$ such that 
	\[
	\KS=\bigoplus_{\eta \subseteq \{1, \dotsc, k\}} \KS_\eta \quad \text{and} \quad \underline{V}=\bigoplus_{\eta \subseteq \{1, \dotsc, k\}} \underline{V}_\eta,
	\]
where $\underline{V}_\eta=(V_1|_{\KS_\eta}, \dotsc, V_k|_{\KS_\eta})$. Moreover, for each  $\eta$, $V_i|_{\KS_\eta}$ is a unitary if $i \in \eta$, and pure isometry if $i \notin \eta$. Clearly, each $\underline{V}_\eta$ is doubly $q$-commuting. Putting everything together, $\underline{T}$ is the restriction to an invariant subspace of a doubly $q$-commuting tuple of coisometries given by  
\begin{align*}
\underline{N}=\bigoplus_{\eta \subseteq \{1, \dotsc, k\}} \underline{N}_\eta \in \mathcal{B}\left(\bigoplus_{\eta \subseteq \{1, \dotsc, k\}} \KS_\eta\right),
\end{align*}
where $\underline{N}_\eta=(N_1|_{\KS_\eta}, \dotsc, N_k|_{\KS_\eta})=(V_1^*|_{\KS_\eta}, \dotsc, V_k^*|_{\KS_\eta})$. So, for each $\eta \subseteq \{1, \dotsc, k\}$, $N_i|_{\KS_\eta}$ is a unitary if $i \in \eta$, and pure coisometry if $i \notin \eta$. Thus, we arrive at the following structure theorem.

	\begin{thm}
	Let $\underline{T}=(T_1, \dotsc, T_k)$ be a $q$-commuting tuple of contractions with $\|q\|=1$ having a regular $q$-unitary dilation. Then there exists a doubly $q$-commuting tuple of coisometries $\underline{N}=(N_1, \dotsc, N_k)$ such that $T_i=N_i|_{\HS}$ for $1 \leq i \leq k$. In particular, $\underline{T}$ is the restriction to an invariant subspace of an operator 
	\[
	\bigoplus_{\eta \subseteq \{1, \dotsc, k\}} \underline{N}_\eta \in \mathcal{B}\left(\bigoplus_{\eta \subseteq \{1, \dotsc, k\}} \KS_\eta\right), 
	\]
where $\underline{N}_\eta=(N_1|_{\KS_\eta}, \dotsc, N_k|_{\KS_\eta})$. Moreover, $N_i|_{\KS_\eta}$ is a unitary if $i \in \eta$, and pure coisometry if $i \notin \eta$ for each $\eta$. Also, the tuple $(N_1|_{\KS_\eta}, \dotsc, N_k|_{\KS_\eta})$ is doubly $q$-commuting for each $\eta \subseteq \{1, \dotsc, k\}$.
	\end{thm}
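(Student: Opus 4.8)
The plan is to produce the tuple $\underline{N}$ by dilating the adjoint tuple and then decomposing it via the Wold theorem for doubly $q$-commuting isometries. Since $\underline{T}$ admits a regular $q$-unitary dilation, restricting that dilation to the closed joint reducing subspace generated by $\HS$ yields a \emph{minimal} regular $q$-unitary dilation $\underline{U}=(U_1,\dotsc,U_k)$ of $\underline{T}$. By Corollary~\ref{cor_112}, the restriction of $(U_1^*,\dotsc,U_k^*)$ to $\KS:=\ov{\text{span}}\{U_1^{*m_1}\dotsc U_k^{*m_k}h : m_1,\dotsc,m_k\in\Z_+,\ h\in\HS\}$ is a minimal $*$-regular $q$-isometric dilation $\underline{V}=(V_1,\dotsc,V_k)$ of $(T_1^*,\dotsc,T_k^*)$. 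Lemma~\ref{lem_106} then gives that $\HS$ is a joint invariant subspace for $V_1^*,\dotsc,V_k^*$ and that $T_i=V_i^*|_{\HS}$ for $1\leq i\leq k$.

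Next I would invoke Theorem~\ref{thm_mainI}: being a minimal $*$-regular $q$-isometric dilation, $\underline{V}$ is doubly $q$-commuting. Consequently $\underline{N}:=(V_1^*,\dotsc,V_k^*)$ is a doubly $q$-commuting tuple of coisometries — the adjoint of a doubly $q$-commuting tuple is again doubly $q$-commuting, directly from the defining relations $V_\alpha V_\beta=q_{\alpha\beta}V_\beta V_\alpha$ and $V_\alpha V_\beta^*=\ov{q}_{\alpha\beta}V_\beta^*V_\alpha$ — and $T_i=N_i|_{\HS}$ for each $i$. This already establishes the first assertion of the theorem.

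For the structural statement I would apply the Wold-type decomposition for doubly $q$-commuting tuples of isometries (see \cite{Maji, RI, RII}) to $\underline{V}$: there are joint reducing subspaces $\{\KS_\eta:\eta\subseteq\{1,\dotsc,k\}\}$ with $\KS=\bigoplus_\eta\KS_\eta$ and $\underline{V}=\bigoplus_\eta\underline{V}_\eta$, where $\underline{V}_\eta=(V_1|_{\KS_\eta},\dotsc,V_k|_{\KS_\eta})$, $V_i|_{\KS_\eta}$ being a unitary for $i\in\eta$ and a pure isometry for $i\notin\eta$. Since each $\KS_\eta$ reduces every $V_i$, it reduces every $N_i=V_i^*$, so passing to adjoints gives $\underline{N}=\bigoplus_\eta\underline{N}_\eta$ with $\underline{N}_\eta=(V_1^*|_{\KS_\eta},\dotsc,V_k^*|_{\KS_\eta})$, each $N_i|_{\KS_\eta}$ a unitary if $i\in\eta$ and a pure coisometry if $i\notin\eta$; each $\underline{N}_\eta$ inherits the doubly $q$-commuting relations from $\underline{N}$. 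Because $\HS$ is invariant for $\underline{N}$ and $T_i=N_i|_{\HS}$, this exhibits $\underline{T}$ as the restriction to an invariant subspace of $\bigoplus_\eta\underline{N}_\eta$, finishing the argument. The main obstacle here is not computational: it is to confirm that the Wold decomposition of \cite{Maji, RI, RII} is available in precisely the form required — the $2^k$ reducing summands together with the unitary/pure-isometry dichotomy for doubly $q$-commuting tuples — and then to transfer it faithfully through the adjoint, checking that ``pure isometry'' passes to ``pure coisometry'' and that the $q$-commutation scalars are unchanged on each summand.
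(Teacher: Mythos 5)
Your proposal is correct and follows essentially the same route as the paper: pass to a minimal regular $q$-unitary dilation, use Corollary~\ref{cor_112} and Lemma~\ref{lem_106} to obtain the minimal $*$-regular $q$-isometric dilation $\underline{V}$ of the adjoint tuple with $T_i=V_i^*|_{\HS}$, invoke Theorem~\ref{thm_mainI} to get that $\underline{V}$ is doubly $q$-commuting, and then apply the Wold-type decomposition of \cite{Maji, RI, RII} and take adjoints. Your extra check that the doubly $q$-commuting relations survive passage to adjoints and to the reducing summands $\KS_\eta$ is a correct (and slightly more explicit) version of what the paper states without comment.
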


\end{document}